\numberwithin{equation}{section}
\newtheorem{theorem}{Theorem}[section]
\newtheorem{corollary}[theorem]{Corollary}
\newtheorem{lemma}[theorem]{Lemma}
\newtheorem{prop}[theorem]{Proposition}
\theoremstyle{definition}
\newtheorem{remark}[theorem]{Remark}
\theoremstyle{definition}
\newtheorem{definition}[theorem]{Definition}
\theoremstyle{definition}
\def\dashint{\operatorname%
{\,\,\text{\bf-}\kern-.98em\DOTSI\intop\ilimits@\!\!}}
\def\\det{\text{det}}
\newcommand{\ddp}[2]{\frac{\partial#1}{\partial#2}}
\newcommand{\D}{\partial D}
\renewcommand{\S}{\mathcal{S}}
\newcommand{\K}{\mathcal{K}}
\newcommand{\outside}{\mathbb{R}^3\setminus \overline{D}}
\renewcommand*{\Re}{\operatorname{Re}}
\newcommand{\A}{\mathcal{A}}
\newcommand{\de}{\: \mathrm{d}}
\newcommand{\R}{\mathbb{R}}
\def\.5{\frac{1}{2}}
\def\Re{\text{Re}\,}
\newcommand{\RN}[1]{%
  \textup{\uppercase\expandafter{\romannumeral#1}}%
}
\renewcommand{\epsilon}{\varepsilon}
\newcounter{marnote}
\begin{document}


\title[Convex acoustic subwavelength resonators]{The interaction between two close-to-touching convex acoustic subwavelength resonators}

\author[H.G. Li, Y. Zhao]{Haigang Li, Yan Zhao}
\address[H.G. Li]{School of Mathematical Sciences, Beijing Normal University, Laboratory of Mathematics and Complex Systems, Ministry of Education, Beijing 100875, China. }
\email{hgli@bnu.edu.cn}
\address[Y. Zhao]{School of Mathematical Sciences, Beijing Normal University, Laboratory of Mathematics and Complex Systems, Ministry of Education, Beijing 100875, China. }
\email{zhaoyan1999@mail.bnu.edu.cn}

\thanks{H.G. Li was partially supported by  NSFC (11971061)}

\date{\today} 

\maketitle


\begin{abstract}
The Minneart resonance is a low frequency resonance in which the wavelength is much larger than the size of the resonators. It is interesting to study the interaction between two adjacent bubbles when they are brought close together. Because the bubbles are usually compressible, in this paper we mainly investigate resonant modes of two general convex resonators with arbitrary shapes to extend the results of Ammari, Davies, Yu in \cite{ADY}, where a pair of spherical resonators are considered by using bispherical coordinates. We combine the layer potential method for Helmholtz equation in \cite{ADY,AFG2017} and the elliptic theory for gradient estimates in \cite{L,LLY} to calculate the capacitance coefficients for the coupled $C^{2,\alpha}$ resonators, then show the leading-order asymptotic behaviors of two different resonant modes and reveal the dependance of the resonant frequencies on their geometric properties, such as convexity, volumes and curvatures. By the way, the blow-up rates of gradient of the scattered pressure are also presented. 
\end{abstract}

\section{Introduction, Formulation and Main Results}

\subsection{Background}

In recent years, the acoustic properties and applications of bubbly media were investigated in various aspects, since it was discovered that a very small volume fraction of air bubbles in water is enough to modify the effective velocity of sound in the medium, see Minnaert's well-known work \cite{minnaert1933musical} and \cite{ ammari2018minnaert, devaud2008minnaert}. The enhancement of their acoustic signature allows ultrasonic techniques to detect, localize, and characterize them inside a visco-elastic opaque medium \cite{CTL2012}. The extraordinary acoustic properties of bubbly media are also used to design new acoustic materials, because the Minnaert resonance of the bubbles persists not only in liquid but also in the soft elastic medium, where the resonators are commonly constructed from a material that has greatly different material parameters to the background medium.  There are many interesting works in physics as well on the acoustic bubble problem, see  \cite{LPLFLT2015} and the references therein. In practice, structures made from subwavelength resonators (a type of metamaterial) also have been used for a wide variety of wave-guiding applications, e.g. \cite{AFLZ2019, kushwaha1998sound, AFG2017, ad, ADOY2019}.

The Minnaert resonance is a low frequency resonance in which the wavelength is much larger than the size of the bubbles \cite{devaud2008minnaert}. It is very interesting to study the interaction between two adjacent bubbles when they are brought close together. Using $\epsilon$ to express the distance between them, Ammari, Davies, Yu \cite{ADY} recently studied the behavior of the coupled subwavelength resonant modes of a pair of spherical resonators. They employed the layer potential method to represent solutions and proved that the leading-order behavior of the resonant modes is determined by the so-called capacitance coefficients \cite{AFLZ2019}, which are well known in the setting of electrostatics, and then calculated them by using bispherical coordinates. However, considering that the bubbles are usually compressible and not necessarily spherical, and in view of shape optimization for applications, it is necessary and important to study the resonators with arbitrary shapes. Motivated by \cite{ADY}, in this paper we mainly deal with the case of general convex resonators in order to explore more applications of subwavelength resonant media.

This multi-scale problem involves the high contrast parameter, $\delta$, of the material density, the small separation distance, $\epsilon$, between the two resonators, the volume and local curvatures of each resonator. From the technical point of view, the technique in this paper breaks the restriction to spheres due to the use of bi-spherical coordinates on the circular case, which allows us to distinguish the effect from the volume (global information) and curvatures (local information) on the resonant frequencies. On the other hand, it can be applied to deal with ellipsoidal resonators, $m$-convex resonators, and even more general convex resonators of arbitrary shape. The main result of this paper shows that both two sub-wavelength resonant frequencies $\omega_i(\delta,\epsilon)\to 0$ as $\delta\to0$ with $\epsilon$ chosen to be a suitable function of $\delta$, and their asymptotic behaviors are of different scales when the distance $\epsilon$ between the resonators is sufficiently small (see Theorem \ref{the:main} and Remark \ref{remark1} for more details).

We first use the layer potential method to perform an asymptotic analysis in terms of the material contrast \cite{ammari2018minnaert, AFLZ2019}, examine the behavior of the eigenmodes when the resonators are brought together \cite{ADY}, and reduce the leading order behavior of the resonant modes to the calculation of the so-called capacitance coefficients. Then, we adapt the local gradient estimates for the Laplace equation developed in \cite{L,llby} to compute asymptotic expansions of the capacitance coefficients for the $C^{2,\alpha}$ resonators with $m$-convex boundaries, $m\geq2$, in the exterior domain. Compared with \cite{L,LLY}, the difference here is that the domain we consider in this paper is unbounded while it is bounded in \cite{L,LLY}. Especially, we need to control the asymptotic behavior at infinity and extend the results in bounded domains to unbounded domains. Finally, similarly as in \cite{ADY}, we show that there are two subwavelength resonant frequencies (see Lemma \ref{lem:two_modes} below) having different asymptotic behaviors
and depending on the contrast of the density, the convexity, volumes, curvatures of the resonators, which can be neatly expressed when the separation distance is chosen as a function of the material contrast. This may have significant implications for the design of acoustic meta-materials with multi-frequency or broadband functionality. As a consequence, we show the dependence of the blow-up rate of the gradient on the geometry of the resonators, is similar to electrostatics \cite{akl, gorb2015singular, bly, lekner2011near,y1}  and linear elasticity \cite{KY, bll, bll2, Li} and the references therein, provided the parameters of the inclusions degenerate to infinity. Here we remark that our method is different from the method used in \cite{ADY} where the authors use bi-spherical coordinates to derive explicit representations for the capacitance coefficients and capture the system's resonant behavior at leading order. Finally, we would like to point out that in this present paper and \cite{ADY} the material parameters are positive, which are different from the surface plasmons studied in \cite{yu2018plasmonic, bonnetier2013spectrum, khattak2019linking}, where the electromagnetic inclusions have negative permittivities.

\subsection{Helmholtz Equations}
In this paper, we study the scattering problem in dimension three of the Helmholtz equation in a high-contrast composite structure, especially when the inclusions are closely spaced. Assume that in a homogeneous background medium, with density $\rho$ and bulk modulus $\kappa$, there is a pair of convex resonators $D_1$ and $D_2$ $\epsilon$-apart, for a small constant $\epsilon>0$. Let $\rho_b$ and $\kappa_b$ denote the density and bulk modulus of the medium that occupies $D_1\cup D_2$. In this paper we assume that $D_1$ and $D_2$ are of arbitrary shape, with $C^{2,\alpha}$ boundaries, $0<\alpha<1$. We follow the notations in \cite{ADY} and introduce the auxiliary parameters
\begin{equation}\label{def_vb}
v=\sqrt{\frac{\kappa}{\rho}},~ v_b=\sqrt{\frac{\kappa_b}{\rho_b}};\quad k= \frac{\omega}{v}, ~k_b= \frac{\omega}{v_b}, 
\end{equation}
which are the wave's speeds and wave's numbers in $\R^3\backslash \overline{D_1\cup D_2}$ and in $D_1\cup D_2$, respectively, and two dimensionless contrast parameters
\begin{equation}\nonumber
\delta=\frac{\rho_b}{\rho},\quad\tau=\frac{k_b}{k}=\frac{v}{v_b}=\sqrt{\frac{\rho_b\,\kappa}{\rho\,\kappa_b}}. 
\end{equation}
Then the acoustic pressure $u$ produced by the scattering of an incoming plane wave $u^{in}$ satisfies
\begin{equation}\label{equ:hel}
\left\{
\begin{aligned}
(\Delta+k^2)\,u&=0,  \quad \mbox{in}~  \R^3 \backslash \overline{D_1\cup D_2},\\
 (\Delta+k_b^2)\,u&=0,  \quad \mbox{in}~  D_1\cup D_2,\\
u|_+-u|_-&=0,  \quad \mbox{on}~  \partial D_1\cup\partial D_2,\\
 \delta \frac{\partial u}{\partial \nu}\Big|_{+}-\frac{\partial u}{\partial \nu}\Big|_{-}&=0,  \quad \mbox{on}~  \partial D_1\cup \partial D_2,\\
\end{aligned} 
\right.
\end{equation}
along with the Sommerfeld radiation condition:
$$(\frac{\partial}{\partial|x|}-ik)(u-u^{in})(x)=O(|x|^{-2}),\quad\mbox{as} ~|x|\to\infty, $$
where the subscripts $+$ and $-$ denote the evaluation from outside and inside $\partial D_{i}$, respectively. We assume that $v$, $v_b$, $\tau$ are all $O(1)$, while the contrast of the density $0< \delta \ll 1$. A classic example satisfying these assumptions is a collection of air bubbles in water, often known as Minnaert bubbles, in which case $\delta \approx 10^{-3}$.

\subsection{Our Domains}

Before stating the main results we first describe our domains precisely. We use $x=(x',x_{3})$ to denote a point in $\mathbb{R}^{3}$, where $x'=(x_{1},x_{2})$. Let $D_{1}^{0}$ and $D_{2}^{0}$ be a pair of touching convex subdomains in $\R^3$, such that
\begin{equation}\label{D10}
D_{1}^{0}\subset\{(x',x_{3})\in\mathbb R^{3}| x_{3}>0\},\quad D_{2}^{0}\subset\{(x',x_{3})\in\mathbb R^{d}| x_{3}<0\},
\end{equation}
with $\{x_{3}=0\}$ being their common tangent plane, and $\partial D_{1}^{0}\cap\partial D_{2}^{0}=\{(0',0)\}$. 
We further assume that the $C^{2,\alpha}$ norms of
$\partial{D}_{i}$ $(i=1,2)$ are bounded by some constants to guarantee that their volumes  are independent of $\varepsilon$ (or of a scale of different order compared to $\varepsilon$). Translating $D_{1}^{0}$ by $\varepsilon$ along the $x_{3}$-axis,  we have $D_{1}^{\varepsilon}:=D_{1}^{0}+(0',\varepsilon)$. When there is no confusion, we always drop the superscripts, and denote
$$D_{1}:=D_{1}^{\varepsilon},\quad D_{2}:=D_{2}^{0},\quad  D:={D_{1}\cup{D}_{2}},$$
and
$$\Omega=\R^3\backslash\overline D,\quad\quad \Omega^0=\R^3\backslash\overline{{(D_1^0\cup D_2^0)}}.$$
We remark that here our domains $\Omega$ and $\Omega^0$ are both unbounded, different from the case of bounded domain considered previously in \cite{L,LLY}.

Near the origin, similarly as in \cite{bll,L}, let us set the two boundary points
\begin{equation*}\label{P1P2}
P_{1}=\left(0',\varepsilon\right)\in \partial D_1,\quad\,P_{2}=\left(0',0\right)\in \partial D_2.
\end{equation*}
By our smoothness assumptions, there exists a small universal constant $R_{0}<1$ such that the portions of
 $\partial{D}_{i}$  near $P_{i}$  can be parameterized by $(x',\varepsilon+h_{1}(x'))$ and $(x',h_{2}(x'))$,
respectively, namely,
$$x_{3}=\varepsilon+h_{1}(x'),~\mbox{and}~\,x_{3}=h_{2}(x'), \quad\mbox{for}~~ |x'|<2R_{0} .$$
By the convexity assumptions on $\partial{D}_{i}$,  $h_{1}$ and $h_{2}$  satisfy
\begin{equation}\label{h1h200}
\varepsilon+h_{1}(x')>h_{2}(x'),\quad\mbox{for}~~|x'|<2R_{0},
\end{equation}
\begin{equation}\label{h1h20}
h_{1}(0')=h_{2}(0')=0,\quad\nabla_{x'}h_{1}(0')=\nabla_{x'}h_{2}(0')=0,
\end{equation}
and
\begin{equation}\label{h1h2}
\|h_{1}\|_{C^{2,\alpha}(B'_{2R_{0}})}+\|h_{2}\|_{C^{2,\alpha}(B'_{2R_{0}})}\leq{C}.
\end{equation}
For simplicity, we assume that 
\begin{equation}\label{equ:D1D2}
\begin{aligned}
&(h_1-h_2)(x')=\Lambda|x'|^m+O(|x'|^{m+1}),\quad\mbox{for}~~ |x'|<2R_0, \\
 &|\nabla_{x'}h_1(x')|,|\nabla_{x'}h_2(x')|\leq C|x'|^{m-1}, \quad\mbox{for}~~ |x'|<2R_0,
\end{aligned}
\end{equation}
and $m$ is an integer, $m\geq2$. Here and throughout the paper, we use the notation $O(A)$ to denote a quantity that can be bounded by $CA$, where $C$ is some positive constant independent of $\varepsilon$ and $\delta$. We call such inclusions $m$-convex inclusions. For example, there is a curvilinear square with rounded-off angles, with boundary given by
$$|x_{1}|^{m}+|x_{2}|^{m}+|x_{3}|^{m}=a^{m},\quad\mbox{if}~\,m>2,$$
for some positive $a$, representing a half width of the inclusion, see Figure \ref{fig1} and Figure \ref{fig2}. We would like to point out that in dimension two this type of inclusion is very close to the Vigdergauz inclusion (see Fig. 3 in \cite{LL}), which has also a nearly square shape and is proved to minimize the elastic energy under the same volume fraction of hard inclusions [19]. More discussions on the connection between $m$-convex inclusion and the Vigdergauz inclusion can refer to page 7-8 in \cite{LL}. So it is interesting to consider $m$-convex inclusion and use the simple curve boundary (1.7) to describe narrow region between two inclusions with nearly square shapes. This is also one motivation of this paper.
\begin{figure}
\begin{minipage}{0.450\linewidth}
\centerline{\includegraphics[width=6.0cm]{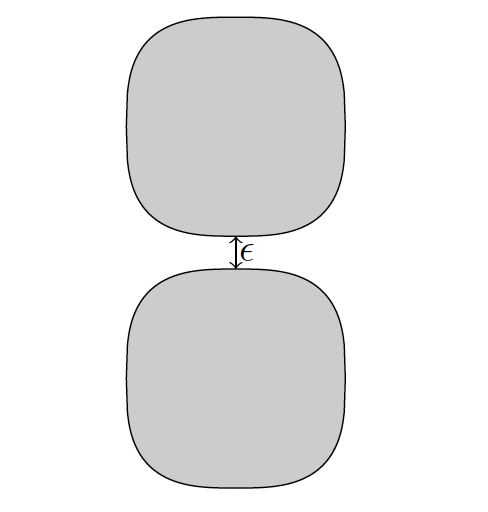}}
\caption{$m=3$.}
\label{fig1}
\end{minipage}
\begin{minipage}{0.450\linewidth}
\centerline{\includegraphics[width=6.0cm]{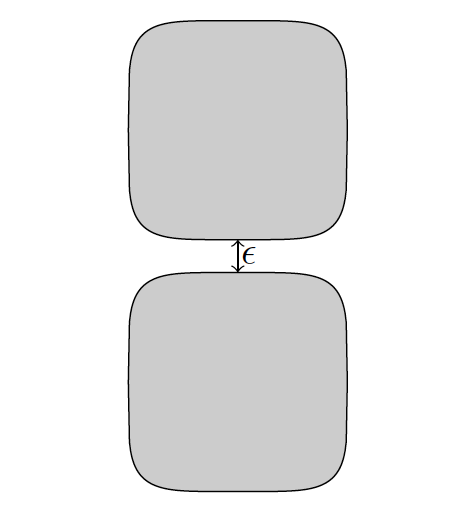}}
\caption{$m=5$.}
\label{fig2}
\end{minipage}       
\end{figure}
For more general domains, see Remark \ref{rem1.8} below. Here and throughout this paper, unless otherwise stated, $C$ denotes a constant, whose values may vary from line to line, depending only on $n$,  $\|\partial{D}_{1}\|_{C^{2,\alpha}}$ and $\|\partial{D}_{2}\|_{C^{2,\alpha}}$, not on $\varepsilon$. We call a constant having such dependence a
{\it universal constant}.

\subsection{Layer Potential for the Helmholtz Equations} In this subsection, we follow the definitions and the notations used in \cite{ADY}. More properties on the layer potential theory and the Neumann--Poincar\'e operator can be found in \cite{AFGZbook2018}. For the Helmholtz equation
$$\Delta u+k^2 u=0,$$
let $G^k$  be the (outgoing) Helmholtz Green's function
\begin{equation*}
G^k(x,y) := -\frac{e^{\mathrm{i} k|x-y|}}{4\pi|x-y|}, \quad x,y \in \R^3,~ k\geq 0,
\end{equation*}
and $\S_{D}^k: L^2(\partial D) \rightarrow H_{\textrm{loc}}^1(\R^3)$ be the corresponding single layer potential, see \cite{12}, defined by
\begin{equation*}
\S_D^k[\phi](x) := \int_{\D} G^k(x,y)\phi(y) \de \sigma(y), \quad x \in \R^3, \, \phi\in L^2(\partial D).
\end{equation*}
Here $H_{\textrm{loc}}^1(\R^3)$  is the usual Sobolev space. We also define the Neumann--Poincar\'e operator $\K_D^{k,*}: L^2(\partial D) \rightarrow L^2(\D)$ by
\begin{equation*}
\K_D^{k,*}[\phi](x) := \int_{\partial D} \frac{\partial }{\partial \nu_x}G^k(x,y) \phi(y) \de \sigma(y), \quad x \in \partial D,
\end{equation*}
where $\partial/\partial \nu_x$ denotes the outward normal derivative at $x\in\D$.
By recalling the transmission conditions for the single layer potential on $\D$ \cite{AFGZbook2018}, in particular, for any $\varphi\in L^2(\D)$,
\begin{equation}\label{eq:jump}
\ddp{}{\nu}\S_D[\varphi]|_\pm=\Big(\pm\frac{1}{2} I+\K_D^*\Big)[\varphi],\quad\mbox{on}~\partial{D}.
\end{equation}
Using the exponential power series we can derive an expansion for $\S_D^k$, given by
\begin{equation} \label{eq:S_series}
\S_D^k= \S_{D} + \sum_{n=1}^{\infty} k^n \S_{D,n},
\end{equation}
where, for $n=0,1,2,\dots$,
\begin{equation*}
\S_{D,n}[\phi](x):=-\frac{\mathrm{i}^n}{4\pi n!} \int_{\D} |x-y|^{n-1}\phi(y) \de\sigma(y), \quad x \in \R^3, \, \phi\in L^2(\partial D),
\end{equation*}
and $\S_D:=\S_{D,0}$ is the Laplace single layer potential. It is well known that $\S_D: L^2(\D) \rightarrow H^1(\D)$ is invertible \cite{AFGZbook2018}. Similarly, for $\K_D^{k,*}$ we have that 
\begin{equation} \label{eq:K_series}
\K_D^{k,*}=\K_D^*+\sum_{n=1}^{\infty} k^n\K_{D,n},
\end{equation}
where, for $n=0,1,2,\dots$,
\begin{equation*}
\K_{D,n}[\phi](x):=-\frac{\mathrm{i}^n(n-1)}{4\pi n!} \int_{\D} |x-y|^{n-3}(x-y)\cdot \nu_x \,\phi(y) \de\sigma(y), \quad x \in \R^3, \, \phi\in L^2(\partial D),
\end{equation*}
and $\K_D^*:=\K_{D,0}$ is the Neumann--Poincar\'e operator corresponding to the Laplace equation. In fact, by the following Lemma, as $k, \epsilon\to0$ we could write $\S_D^k= \S_{D}+O(k)$ and $\K_D^{k,*}=\K_D^*+O(k)$ in the relevant operator norms respectively .

\begin{lemma}(\cite{AFGZbook2018})\label{lemma11}
The norms $||S_{D,n}||_{B(L^2(\partial D),H^1(\partial D))}$ and $||\K_{D,n}||_{B(L^2(\partial D),L^2(\partial D))}$ are uniformly bounded for $n\geq 1$ and $0<\epsilon\ll1$. Moreover, the series \eqref{eq:S_series} and \eqref{eq:K_series} are uniformly convergent for $\epsilon>0$, in $B(L^2(\partial D), H^1(\partial D))$ and $B(L^2(\partial D), L^2(\partial D))$, respectively.
\end{lemma}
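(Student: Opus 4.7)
The plan is to exploit that, for every $n\ge 1$, the kernels appearing in $\S_{D,n}$ and $\K_{D,n}$ are polynomial in $|x-y|$ of nonnegative order, hence continuous across the diagonal, so that no singularity from the Helmholtz Green's function survives. The whole argument then reduces to tracking the pointwise size of these kernels and of their tangential derivatives against the factorial $n!$ in the denominator.

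First I would fix universal constants $M\ge \mathrm{diam}(\partial D)$ and $A\ge|\partial D|$; both bounds are independent of $\varepsilon$, because $\partial D$ is a bounded $C^{2,\alpha}$ surface whose data is controlled uniformly in $\varepsilon$ through the hypotheses \eqref{h1h2} on $h_1,h_2$. For $\K_{D,n}$, I observe that $\K_{D,1}\equiv 0$ (the factor $(n-1)$ vanishes), and for $n\ge 2$ the kernel is bounded pointwise by
\begin{equation*}
\Bigl|\tfrac{\mathrm{i}^n(n-1)}{4\pi n!}|x-y|^{n-3}(x-y)\cdot\nu_x\Bigr|\le \frac{(n-1)\,M^{n-2}}{4\pi\, n!}\le\frac{M^{n-2}}{4\pi(n-2)!};
\end{equation*}
a standard Schur test then produces
\begin{equation*}
\|\K_{D,n}\|_{B(L^2(\partial D),L^2(\partial D))}\le \frac{A\,M^{n-2}}{4\pi(n-2)!},\qquad n\ge 2,
\end{equation*}
uniformly in $\varepsilon$. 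For $\S_{D,n}$ the $L^2$-to-$L^2$ bound follows the same way from $|x-y|^{n-1}\le M^{n-1}$, and the tangential-gradient bound on $\partial D$ is obtained by differentiating the smooth kernel under the integral sign: the resulting expression is controlled pointwise by $(n-1)|x-y|^{n-2}/(4\pi n!)$, yielding the same factorial decay and thereby the uniform-in-$n$ bound on $\|\S_{D,n}\|_{B(L^2(\partial D),H^1(\partial D))}$.

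With these per-term bounds in hand, the convergence of the series \eqref{eq:S_series} and \eqref{eq:K_series} follows at once by majorization: for every fixed $k$,
\begin{equation*}
\sum_{n\ge 1}|k|^n\,\|\S_{D,n}\|_{B(L^2,H^1)} + \sum_{n\ge 2}|k|^n\,\|\K_{D,n}\|_{B(L^2,L^2)}\le C\bigl(|k|+k^2 e^{|k|M}\bigr),
\end{equation*}
which is finite and, depending only on $|k|$, $M$, $A$, is uniform in $\varepsilon>0$. This yields in particular $\S_D^k=\S_D+O(k)$ in $B(L^2(\partial D),H^1(\partial D))$ and $\K_D^{k,*}=\K_D^*+O(k)$ in $B(L^2(\partial D),L^2(\partial D))$ as $k\to 0$.

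The only non-routine point to check is that the $H^1(\partial D)$ estimate for $\S_{D,n}$ is genuinely uniform in $\varepsilon$: the $H^1$-norm on $\partial D$ is defined through tangential derivatives and integration against the induced surface measure on a surface whose geometry degenerates as the resonators touch. Near the touching point, however, $\partial D_1$ and $\partial D_2$ are graphs of $\varepsilon+h_1$ and $h_2$ with $C^{2,\alpha}$-norms uniformly controlled by \eqref{h1h2}, so the induced metric, its Jacobian determinant, and the tangential frames are all uniformly bounded; tangential derivatives of the smooth kernel $|x-y|^{n-1}$ are consequently dominated by its Euclidean gradient without any $\varepsilon$-dependent loss. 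This is the one place where the smallness of $\varepsilon$ could in principle interfere, and it is the step that carries over from the classical theory in \cite{AFGZbook2018} once the uniform control of the local parametrization is noted.
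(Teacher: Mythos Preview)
Your argument is correct. The paper does not supply a proof of this lemma; it merely cites the result from \cite{AFGZbook2018}, and the direct kernel-estimate approach you give---bounding the smooth kernels $|x-y|^{n-1}$ and $(n-1)|x-y|^{n-3}(x-y)\cdot\nu_x$ pointwise by powers of $M=\mathrm{diam}(\partial D)$, applying the Schur test, and summing against the factorials---is precisely the standard route taken there.

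One minor simplification of your last paragraph: the geometry of $\partial D$ does not actually degenerate as $\varepsilon\to0$. Since $D_1=D_1^0+(0',\varepsilon)$ is a rigid translate of a fixed domain and $D_2=D_2^0$ is fixed, the surfaces $\partial D_1$ and $\partial D_2$ are individually independent of $\varepsilon$; hence $|\partial D|$, $\mathrm{diam}(\partial D)$, the induced metrics, Jacobians, and tangential frames are all $\varepsilon$-independent without any appeal to the local graph bounds \eqref{h1h2}. The only $\varepsilon$-dependence is the mutual distance between the two components, and for $n\ge1$ the kernels are nonsingular on the diagonal, so this distance plays no role. Your caution is therefore unnecessary here, though harmless.
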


The solutions to \eqref{equ:hel} can be represented as follows: 
\begin{equation} \label{eq:layer_potential_representation}
u = \begin{cases}
u^{in}(x)+\S_{D}^k[\psi](x), & x\in\outside,\\
\S_{D}^{k_b}[\phi](x), & x\in D=D_{1}\cup D_{2},
\end{cases}
\end{equation} 
for some surface potentials $(\phi,\psi)\in L^2(\D)\times L^2(\D)$, which must be chosen so that $u$ satisfies the two transmission conditions across $\D$. Using the jump relation \eqref{eq:jump}, this is equivalent to $(\phi,\psi)$ satisfying 
\begin{equation} \label{eq:A_matrix_equation}
\A(\omega,\delta)\begin{pmatrix} \phi \\ \psi \end{pmatrix} = \begin{pmatrix} u^{in} \\ \delta\ddp{u^{in}}{\nu_x}\end{pmatrix},
\end{equation}
where
\begin{equation} \label{matrix_A}
\A(\omega,\delta):=
\begin{pmatrix}
	\S_D^{k_b} & -\S_D^k \\
	-\frac{1}{2}\mathcal{I}+\K_D^{k_b,*} & -\delta(\frac{1}{2}\mathcal{I}+\K_D^{k,*}) 
\end{pmatrix},
\end{equation}
and $\mathcal{I}$ is the identity operator on $L^2(\D)$, see e.g. \cite{AFGZbook2018} for more details.

\subsection{Resonant Frequencies and the Capacitance Coefficients}

In light of the representation \eqref{eq:layer_potential_representation}, we can define the notion of resonance to be the existence of a non-trivial solution when the incoming field $u^{in}$ is zero.

\begin{definition} \label{defn:resonance}
	For a fixed $\delta$, we define a resonant frequency (or eigenfrequency) to be $\omega\in\mathbb{C}$ such that there exists a non-trivial solution to
	\begin{equation*}\label{eq:res}
	\A(\omega,\delta)
	\begin{pmatrix}
	\phi \\ \psi
	\end{pmatrix}
	=
	\begin{pmatrix}
	0 \\ 0
	\end{pmatrix},
	\end{equation*}
	where $\A(\omega,\delta)$ is defined in \eqref{matrix_A}.
	For each resonant frequency $\omega$, we define the corresponding resonant mode (or eigenmode) as
	\begin{equation} \label{eq:eigenmode_representation}
	u = \begin{cases}
	\S_{D}^{k_b}[\phi](x), & x\in D=D_{1}\cup D_{2}, \\
	\S_{D}^k[\psi](x), & x\in\outside.
	\end{cases}
	\end{equation}
\end{definition}

\begin{definition}\label{def12}
	We define a subwavelength resonant frequency to be a resonant frequency $\omega=\omega(\delta)$ such that $\omega(0)=0$, and $\omega$ depends on $\delta$ continuously.
\end{definition}
The resonant modes \eqref{eq:eigenmode_representation} are determined only up to normalization. We will choose the normalization to be such that $u_n\sim1$ on $\D$ for all small $\delta$ and $\epsilon$, see Theorem \ref{thm:gradient_bounds} below. When $\omega=0$ and $\delta=0$,
\begin{equation} \label{matrix_defn}
\A(0,0):=
\begin{pmatrix}
	\S_D & -\S_D \\
	-\frac{1}{2}\mathcal{I}+\K_D^{*} & 0
\end{pmatrix},
\end{equation}
Since $\S_D$ is invertible, it follows that $\dim ker\A(0,0)=\dim ker(-\frac{1}{2}\mathcal{I}+\K_D^{*})$. By the argument in Lemma 2.12 of \cite{ad}, we know that
$$ker(-\frac{1}{2}\mathcal{I}+\K_D^{*})=\text{span} \Big\{\S_D^{-1}[\chi_{\partial{D}_{1}}],\S_D^{-1}[\chi_{\partial{D}_{2}}]\Big\}.$$

Notice that each resonant mode in fact has two resonant frequencies associated to it with real parts that differ in sign.  Here, we use $\omega_n$ to denote the resonant frequency associated to $u_n$ that has a positive real part. By virtue of Lemma \ref{lem:26} below, $-\omega_n$ is also a resonant frequency associated to the mode $u_n$. See Lemma \ref{lem:two_modes} and Remark \ref{remark15} below.

By the theory of Gohberg and Sigal \cite{GL}, the following Lemma is obtained in \cite{ADY}.
\begin{lemma} \label{lem:two_modes}
	(\cite{ADY}) There exist two subwavelength resonant modes, $u_1$ and $u_2$, with associated resonant frequencies $\omega_1$ and $\omega_2$ with positive real part, labelled such that $\Re(\omega_1)<\Re(\omega_2)$.
\end{lemma}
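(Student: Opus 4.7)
The plan is to apply the generalized argument principle of Gohberg--Sigal \cite{GL} to the analytic operator pencil $\omega\mapsto\mathcal{A}(\omega,\delta)$, following the strategy of \cite{ADY}. First, by the uniformly convergent expansions \eqref{eq:S_series}--\eqref{eq:K_series} together with Lemma \ref{lemma11}, $\mathcal{A}(\omega,\delta)$ is operator-norm analytic in $\omega$ on a disk $V\subset\mathbb{C}$ of some radius $r>0$ around $0$, uniformly for small $\delta\ge 0$; and $\mathcal{A}(0,0)$ in \eqref{matrix_defn} is Fredholm of index zero (since $\S_D\colon L^2(\partial D)\to H^1(\partial D)$ is invertible and $-\tfrac12 I+\K_D^*$ is Fredholm of index zero on $L^2(\partial D)$).

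Next I would identify the limiting spectrum. From the block form \eqref{matrix_defn}, $\ker\mathcal{A}(0,0)=\{(\phi,\phi):\phi\in\ker(-\tfrac12 I+\K_D^*)\}$, which is $2$-dimensional by the argument recalled in the excerpt and is spanned by $(\S_D^{-1}[\chi_{\partial D_i}],\S_D^{-1}[\chi_{\partial D_i}])$ for $i=1,2$. For small $\omega\ne 0$, $\mathcal{A}(\omega,0)$ is invertible (the interior Neumann spectrum of $D$ stays bounded away from $0$), so $\omega=0$ is the only characteristic value of $\mathcal{A}(\cdot,0)$ inside $V$ once $r$ is chosen small enough.

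The Gohberg--Sigal generalized Rouch\'e theorem then gives, for $\delta>0$ sufficiently small, that the total multiplicity of characteristic values of $\mathcal{A}(\cdot,\delta)$ inside $V$ equals the algebraic multiplicity of $\omega=0$ as a characteristic value of $\mathcal{A}(\cdot,0)$, and each such characteristic value depends continuously on $\delta$ and converges to $0$ as $\delta\to 0$, hence is subwavelength in the sense of Definition \ref{def12}. Matching powers in the asymptotic expansion of $\mathcal{A}(\omega,\delta)$ (second order in $\omega$, first in $\delta$), the leading-order resonance condition reduces to a $2\times 2$ symmetric eigenvalue problem driven by the capacitance matrix $C=(C_{ij})_{i,j=1,2}$ of $(D_1,D_2)$; the two eigenvalues of $C$ yield (via the $\pm$ square-root branch and the conjugation symmetry $\omega\leftrightarrow -\bar\omega$ inherited from $\overline{G^k(x,y)}=G^{-\bar k}(x,y)$) four subwavelength characteristic values in $V$, of which exactly two have positive real part. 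Labelling these as $\omega_1,\omega_2$ with $\Re(\omega_1)\le\Re(\omega_2)$ produces the claimed pair of modes.

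The main obstacle is the \emph{strict} inequality $\Re(\omega_1)<\Re(\omega_2)$, i.e.\ ensuring that the two eigenvalues of $C$ are genuinely distinct so that the doubly degenerate characteristic value at $(\omega,\delta)=(0,0)$ splits into two different branches rather than remaining a single branch of multiplicity two. For disjoint bounded convex $C^{2,\alpha}$ resonators this follows from a direct potential-theoretic argument on $C$ (symmetric positive-definite, not proportional to the identity), while its precise $\epsilon$-dependence is the technical core of the paper and underlies the different asymptotic scales of $\omega_1$ and $\omega_2$ stated in Theorem \ref{the:main}.
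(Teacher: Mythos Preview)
The paper does not supply its own proof of this lemma; it simply states that ``By the theory of Gohberg and Sigal \cite{GL}, the following Lemma is obtained in \cite{ADY}'' and then quotes the result. Your proposal is precisely a sketch of the Gohberg--Sigal argument from \cite{ADY} that the paper is deferring to, so your approach coincides with what the paper indicates.

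One small point worth flagging: in your Rouch\'e step you want the total null multiplicity of $\omega=0$ for $\mathcal{A}(\cdot,0)$ to be four (so as to produce four characteristic values for $\delta>0$, two with positive real part), whereas $\dim\ker\mathcal{A}(0,0)=2$. The extra factor of two comes from the fact that each eigenvector admits a length-two root chain (the first-order term $\K_{D,1}=0$ and the perturbation enters at order $\omega^{2}$), which you implicitly use in the ``matching powers'' sentence but do not state. Apart from that, your outline is sound and matches the cited source.
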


Now let $\psi_1,\psi_2\in L^2(\D)$ be given by
\begin{equation} \label{eq:psi_defs}
\S_D[\psi_1]=\begin{cases}
1&\text{on } \D_1,\\
0&\text{on } \D_2,
\end{cases}
\qquad\mbox{and}~
\S_D[\psi_2]=\begin{cases}
0&\text{on } \D_1,\\
1&\text{on } \D_2.
\end{cases}
\end{equation}
It follows that
\begin{equation} \label{kernel_basis}
ker\left(-\frac{1}{2}\mathcal{I}+\K_D^*\right)=\text{span}\{\psi_1,\psi_2\}.
\end{equation}
Let $v_j=S_D[\psi_j]$ be defined as the extension of \eqref{eq:psi_defs} to $\mathbb{R}^{3}\backslash\overline{D}$. Then $v_j$ is the unique solution to \begin{equation} \label{equ:vj}
\begin{cases}
\Delta v_j = 0, & \text{in } ~\R^3 \backslash \overline{D_1\cup D_2}, \\
v_j=\delta_{ij}, & \text{on } \D_i,\\
v_j(x)=O\left(\tfrac{1}{|x|}\right), & \text{as } |x|\to\infty.
\end{cases}
\end{equation}
We define the capacitance matrix $\mathcal{C}=(C_{ij})_{2\times2}$ as follows:
\begin{equation}\label{co1}
C_{ij}:=-\int_{\D_i} \psi_j \de \sigma, \quad i,j=1,2.
\end{equation}
Thanks to the jump relation \eqref{eq:jump} and the fact that $\psi_j$ satisfies the \eqref{kernel_basis}, 
$$\int_{\D_i} \ddp{S_D[\psi_j]}{\nu}\bigg|_+ \de \sigma=\int_{\D_i} \Big(\frac{1}{2} I+\K_D^*\Big)[\psi_j] \de \sigma=\int_{\D_i} \psi_j \de \sigma.$$
Thus, we can write the capacitance coefficients in the form
\begin{equation} \label{eq:cap_reform}
C_{ij}=-\int_{\D_i} \ddp{v_j}{\nu}\bigg|_+ \de \sigma, \quad i,j=1,2.
\end{equation}

For $C_{ii}$, by applying the divergence theorem, and in view of Proposition 2.74 and 2.75 in \cite{GB}, which imply that $\frac{\partial v_j}{\partial r}\sim O(\frac{1}{|x|^2})$ as $|x|\to\infty$, we have 
\begin{equation}\label{co2}
C_{ii}=-\int_{\partial D_i}\frac{\partial v_i}{\partial \nu}\Big|_{+}d\sigma=\int_{\R^3\backslash \overline{D}}\left|\nabla v_i\right|^2.
\end{equation}

Denote
\begin{equation}\label{Emn}
\rho_{m}(\epsilon)=
\begin{cases}
|\log\epsilon|,&m=2,\\
\frac{1}{\epsilon^{1-2/m}},&m>2;
\end{cases}
\quad\mbox{and}~
E_{m}(\epsilon) = \begin{cases}
\epsilon^{1/4}{|\log\epsilon|} , & m=2,\\
\epsilon^{\frac{1}{2m}}, & m>2;
\end{cases}
\end{equation}

and
\begin{equation}\label{gamma_m}
\mathcal{L}_{m}:=2\pi\int_{0}^{+\infty}\frac{r\,dr}{1+r^{m}}, m>2 ;\quad \mathcal{L}_{2}:=\pi\end{equation} 
Then, we adapt the method in \cite{L} for gradient estimates for the Dirichlet problem in bounded domain to exterior domains $\R^3\backslash \overline{D}$ and obtain the following asymptotic formula for the capacitance coefficients.

\begin{prop}\label{the:1}
Suppose that $D_{1}$ and $D_{2}$ satisfy \eqref{h1h200}-\eqref{equ:D1D2}. Let $C_{ij}$ be defined by \eqref{eq:cap_reform}. Then  

(i) there exist constants $M_i$, depending only on $D_1^0$ and $D_2^0$, such that
\begin{equation}\label{equ:m>2}
C_{ii}= \frac{\mathcal{L}_{m}}{\Lambda^{2/m}}\rho_{m}(\epsilon)+M_i+O(E_{m}(\epsilon)),\quad\,i=1,2,
\end{equation} 
where  $\mathcal{L}_{m}$, $\rho_{m}(\epsilon)$, $E_{m}(\epsilon)$ are defined in \eqref{Emn} and \eqref{gamma_m}.

(ii) there exists a constant $C>0$, independent of $\epsilon$, such that
\begin{equation}\label{equ:C1112}
\frac{1}{C}<C_{11}+C_{12}<C,\quad\frac{1}{C}<C_{21}+C_{22}<C.
\end{equation}  
\end{prop}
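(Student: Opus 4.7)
My plan for part (i) is to adapt the bounded-domain variational approach of \cite{L,LLY} to the exterior setting $\Omega=\R^{3}\setminus\overline{D}$. The starting point is the energy identity $C_{ii}=\int_{\Omega}|\nabla v_{i}|^{2}$ from \eqref{co2} together with the Dirichlet principle: $v_{i}$ minimises this energy among functions matching the boundary data of \eqref{equ:vj} and decaying at infinity. The singularity comes from the neck region $\Omega^{*}_{R_{0}}:=\{|x'|<R_{0},\ h_{2}(x')<x_{3}<\varepsilon+h_{1}(x')\}$, where $v_{1}$ must traverse from $1$ to $0$ across the thin layer of height $\eta(x'):=\varepsilon+h_{1}(x')-h_{2}(x')$.

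I would construct an explicit test function $\bar{v}_{i}$ taking the linear interpolation $\bar{v}_{1}(x',x_{3})=(x_{3}-h_{2}(x'))/\eta(x')$ in the neck (and the symmetric choice for $\bar{v}_{2}$), glued smoothly outside $\Omega^{*}_{R_{0}}$ to a fixed harmonic function $v_{i}^{0}$ on $\Omega^{0}$ with boundary values $\delta_{ij}$ and $O(|x|^{-1})$ decay. A direct calculation gives
\begin{equation*}
\int_{\Omega^{*}_{R_{0}}}|\nabla\bar{v}_{i}|^{2}=\int_{|x'|<R_{0}}\frac{dx'}{\eta(x')}+(\text{lower-order tangential terms}),
\end{equation*}
and by \eqref{equ:D1D2} and the rescaling $|x'|=(\varepsilon/\Lambda)^{1/m}s$ in polar coordinates the leading integral reduces to $(\mathcal{L}_{m}/\Lambda^{2/m})\rho_{m}(\varepsilon)+O(1)$ in both the $m=2$ and $m>2$ regimes, reproducing the claimed singular term. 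The constant $M_{i}$ is then identified as the $\varepsilon$-independent Dirichlet energy of $v_{i}^{0}$ on $\Omega^{0}\setminus\Omega^{*}_{R_{0}}$ (with a boundary flux correction from the sides of the neck). The upper estimate $C_{ii}\le\int|\nabla\bar{v}_{i}|^{2}$ follows from the Dirichlet principle; for the matching lower bound one plugs $v_{i}=\bar{v}_{i}+(v_{i}-\bar{v}_{i})$ into the quadratic form and controls the cross term and $\int|\nabla(v_{i}-\bar{v}_{i})|^{2}$ by local elliptic/Poincar\'e estimates, exactly as in \cite{L}. The decay $v_{i}=O(|x|^{-1})$, $|\nabla v_{i}|=O(|x|^{-2})$ from Propositions 2.74--2.75 of \cite{GB} ensures $\int_{|x|>R}|\nabla v_{i}|^{2}\to 0$ as $R\to\infty$, so the unbounded tail contributes nothing extra.

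For part (ii), set $w:=v_{1}+v_{2}$, which is harmonic in $\Omega$, identically $1$ on $\partial D$, and $O(|x|^{-1})$ at infinity; in particular $C_{i1}+C_{i2}=-\int_{\partial D_{i}}(\partial w/\partial\nu)|_{+}\,d\sigma$. Let $\tilde{w}_{i}$ be the capacitary potential of $D_{i}$ alone (harmonic outside $D_{i}$, equal to $1$ on $\partial D_{i}$, vanishing at infinity); then $0\le\tilde{w}_{i}\le 1$ on $\R^{3}\setminus\overline{D_{i}}$. Since $w\equiv 1\ge\tilde{w}_{i}$ on $\partial D_{j}$ ($j\ne i$) and $w=\tilde{w}_{i}$ on $\partial D_{i}$, the maximum principle yields $w\ge\tilde{w}_{i}$ throughout $\Omega$; comparing inward normal derivatives on $\partial D_{i}$ gives $-(\partial w/\partial\nu)|_{+}\ge-(\partial\tilde{w}_{i}/\partial\nu)|_{+}$, so $C_{i1}+C_{i2}\ge\mathrm{cap}(D_{i}^{0})>0$, a positive $\varepsilon$-independent constant (as $D_{1}^{\varepsilon}$ is a rigid translate of $D_{1}^{0}$). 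The upper bound follows from the total-capacity identity
\begin{equation*}
(C_{11}+C_{12})+(C_{21}+C_{22})=\int_{\Omega}|\nabla w|^{2}=\mathrm{cap}(D_{1}\cup D_{2})\le\mathrm{cap}(B_{R})
\end{equation*}
for any fixed ball $B_{R}\supset D_{1}\cup D_{2}$ (valid for all small $\varepsilon$), combined with the lower bound on the complementary sum.

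The main obstacle is the sharp error $O(E_{m}(\varepsilon))$ in part (i). The arguments of \cite{L,LLY} exploit boundedness of the ambient domain through Poincar\'e inequalities and global cutoff constructions that are not directly available here. The technical heart of the proof is to decompose $\Omega$ into the neck, an intermediate annular region, and the far field, and to carry out the corrector estimate for $v_{i}-\bar{v}_{i}$ simultaneously respecting the $O(|x|^{-1})$ behaviour at infinity and the fine geometry of $\eta(x')$ at the neck. This reconciliation of local singular behaviour with global decay is precisely the novelty, relative to \cite{L,LLY}, that the paper highlights.
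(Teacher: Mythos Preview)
Your outline for part~(i) is broadly in the spirit of the paper's argument: split the energy into a neck part governed by the Keller-type interpolant $\bar v_i$ and an outer part governed by the touching-limit solution $v_i^0$.  Two differences are worth noting.  First, the paper does not glue $\bar v_i$ to $v_i^0$ into a single test function (this is delicate because $v_i^0$ lives on $\Omega^0$, not on $\Omega$); instead it introduces an intermediate scale $\varepsilon^{\gamma}$ with $\gamma=\tfrac{1}{4m}$, compares $v_i$ with $\bar v_i$ on $\Omega_{\varepsilon^\gamma}$, and compares $v_i$ with $v_i^0$ on $\Omega\setminus\Omega_{\varepsilon^\gamma}$ via the $L^\infty$ bound $\|v_i-v_i^0\|\le C\varepsilon^{1/2}$ (Lemma~\ref{lem_29}) and interpolation to gradients.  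The far-field is handled by the separate Lemma~\ref{lem:difi}.  Second, $M_i$ is not just the exterior energy of $v_i^0$: it also contains the cross term $2\int_{\Omega^0_{R_0}}\nabla\bar v_i^0\cdot\nabla(v_i^0-\bar v_i^0)$ and a constant coming from the explicit neck integral; your parenthetical ``boundary flux correction'' under-describes this.

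For part~(ii) your maximum-principle comparison is pointing the wrong way.  From $w:=v_1+v_2\ge\tilde w_i$ in $\Omega$ with equality on $\partial D_i$, the Hopf lemma gives $\partial_\nu(w-\tilde w_i)\big|_{+}\ge 0$ on $\partial D_i$ (with $\nu$ outward to $D_i$, hence inward to $\Omega$), i.e.\ $-\partial_\nu w\big|_{+}\le -\partial_\nu\tilde w_i\big|_{+}$.  Integrating yields
\[
C_{i1}+C_{i2}\;\le\;\mathrm{cap}(D_i),
\]
the \emph{upper} bound, not the lower bound you claim.  Together with your capacity-monotonicity argument you therefore have two upper bounds and no lower bound; and since $C_{12}<0$, positivity of $C_{i1}+C_{i2}$ uniformly in $\varepsilon$ is exactly the non-trivial part.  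The paper obtains the lower bound by a different route: write $C_{11}+C_{12}=-\int_{\partial B_R}\partial_\nu v_1$ via the divergence theorem, use the exterior Poisson representation (Lemma~\ref{green}) to express $\partial_\nu v_1$ on $\partial B_R$ in terms of $\int_{\partial B_{r_0}}v_1$, and then invoke the comparison $v_1=v_1^0+O(\varepsilon^{1/2})$ on $\partial B_{r_0}$ (Lemma~\ref{lem_29}) to replace this by the $\varepsilon$-independent positive quantity $\int_{\partial B_{r_0}}v_1^0$.  In other words, the limit problem $v_1^0$ is needed for part~(ii) as well, not only for part~(i).
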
 
Denote the rescaled form of $C_{ij}$ by
\begin{equation} \label{equ:cij}
\overline{C}_{ij}=\frac{1}{|D_i|}C_{ij}, \quad i,j=1,2,
\end{equation}
where $|D_i|$ denotes the volume of $D_i$. Let $\lambda_n$ denote the eigenvalues of matrix $\overline{\mathcal{C}}=(\overline{C}_{ij})_{2\times2}$. A simple calculation yields
\begin{equation}\label{equ:eig}
\lambda_n=\frac{1}{2}\left(\overline{C}_{11}+\overline{C}_{22}+(-1)^n\sqrt{(\overline{C}_{11}-\overline{C}_{22})^2+4\overline{C}_{12}\overline{C}_{21}}\right),\quad n=1,2.
\end{equation}
Then the resonant frequencies are
determined by $\lambda_1$ and $\lambda_2$.  The following relationship is from Lemma 4.1 in \cite{ADY}, which actually holds for bounded domains with Lipschitz boundaries, see Lemma 2.1 of \cite{AFLZ2019}. For the reader's convenience, we give some details of the proof.

\begin{lemma} (\cite{ADY})\label{lem:26}
As the contrast $\delta\to0$, the subwavelength resonant frequencies of two resonators $D_1$ and $D_2$ are given by
\begin{equation*}
\omega_n=\sqrt{\delta v_b^2 \lambda_n}+O(\delta),\quad n=1,2,
\end{equation*}
where $v_b=\sqrt{\frac{\kappa_b}{\rho_b}}$ defined in \eqref{def_vb}, and $\lambda_1$, $\lambda_2$ are defined in \eqref{equ:eig}.
\end{lemma}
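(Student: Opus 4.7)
The strategy is to reduce the condition $\ker\mathcal{A}(\omega,\delta)\neq\{0\}$ to a $2\times 2$ eigenvalue problem for the rescaled capacitance matrix $\overline{\mathcal{C}}$ by perturbing around $\mathcal{A}(0,0)$, whose kernel is completely explicit. By Lemma~\ref{lemma11} we have $\mathcal{S}_D^k=\mathcal{S}_D+O(\omega)$ and $\mathcal{K}_D^{k,*}=\mathcal{K}_D^*+O(\omega)$ in the appropriate operator norms (and likewise in $k_b$), while $\mathcal{A}(0,0)$ is given by \eqref{matrix_defn}. Since $\mathcal{S}_D:L^2(\partial D)\to H^1(\partial D)$ is invertible, the first row of $\mathcal{A}(0,0)(\phi,\psi)^T=0$ forces $\phi=\psi$, and the second row together with \eqref{kernel_basis} then forces $\phi\in\text{span}\{\psi_1,\psi_2\}$. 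Hence $\ker\mathcal{A}(0,0)$ is two-dimensional, a fact underlying Lemma~\ref{lem:two_modes} and the appearance of exactly two characteristic values bifurcating from $\omega=0$.

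I parametrize a non-trivial element of $\ker\mathcal{A}(\omega,\delta)$ by its projection onto $\ker\mathcal{A}(0,0)$, writing $\phi=a_1\psi_1+a_2\psi_2+\widetilde\phi$ with $\widetilde\phi=O(\omega+\delta)$ in $L^2(\partial D)$. The first row $\mathcal{S}_D^{k_b}[\phi]=\mathcal{S}_D^k[\psi]$, combined with $\mathcal{S}_D^{k_b}-\mathcal{S}_D=O(\omega)$ and invertibility of $\mathcal{S}_D$, yields $\psi=\phi+O(\omega)$. Substituting this into the second row, $(-\tfrac12\mathcal{I}+\mathcal{K}_D^{k_b,*})[\phi]=\delta(\tfrac12\mathcal{I}+\mathcal{K}_D^{k,*})[\psi]$, and integrating over $\partial D_i$ for $i=1,2$, produces the two-dimensional system that determines $\omega$.

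The key integration step runs as follows. For the left-hand side I use \eqref{eq:jump} to identify the integrand with $\partial_\nu \mathcal{S}_D^{k_b}[\phi]|_-$, then apply the divergence theorem on $D_i$ together with $(\Delta+k_b^2)\mathcal{S}_D^{k_b}[\phi]=0$ in $D$, obtaining
\begin{equation*}
\int_{\partial D_i}\Big(-\tfrac12\mathcal{I}+\mathcal{K}_D^{k_b,*}\Big)[\phi]\,d\sigma=-k_b^2\int_{D_i}\mathcal{S}_D^{k_b}[\phi]\,dx.
\end{equation*}
At leading order $\mathcal{S}_D[\phi_0]$ is harmonic in $D_i$ with constant boundary value $a_i$ by \eqref{eq:psi_defs}, hence equals $a_i$ throughout $D_i$, so the left-hand side becomes $-(\omega^2/v_b^2)\,a_i|D_i|+O(\omega^3+\omega\delta)$. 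For the right-hand side I again use \eqref{eq:jump} to write $(\tfrac12\mathcal{I}+\mathcal{K}_D^{k,*})[\psi]=\partial_\nu \mathcal{S}_D^k[\psi]|_+$; replacing $\psi$ by $\sum_j a_j\psi_j$ identifies $\mathcal{S}_D^k[\psi]$ with $\sum_j a_j v_j$ in $\outside$ to leading order, and \eqref{eq:cap_reform} gives $\int_{\partial D_i}\partial_\nu v_j|_+\,d\sigma=-C_{ij}$. Dividing through by $|D_i|$ leads to
\begin{equation*}
\frac{\omega^2}{v_b^2}\,a_i=\delta\sum_{j=1}^{2}\overline{C}_{ij}\,a_j+O(\omega^3+\omega\delta),
\end{equation*}
which is the eigenvalue equation $\overline{\mathcal{C}}\vec a=(\omega^2/(\delta v_b^2))\vec a$ at leading order; dominant balance $\omega^2\sim\delta$ then yields $\omega_n=\sqrt{\delta v_b^2\lambda_n}+O(\delta)$ for $n=1,2$, with $\lambda_n$ as in \eqref{equ:eig}.

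The main obstacle is upgrading this formal reduction to a rigorous asymptotic. Existence of exactly two subwavelength resonances and their continuous dependence on $\delta$ (required by Definition~\ref{def12}) come from the generalized Rouch\'e / Gohberg--Sigal theory \cite{GL} applied to the holomorphic operator pencil $\omega\mapsto\mathcal{A}(\omega,\delta)$, counting the characteristic values emerging from the two-dimensional kernel at $\omega=\delta=0$. The delicate analytical point is uniform control of the remainders as $\epsilon\to 0$: since the entries $C_{ij}$ themselves blow up by Proposition~\ref{the:1}, the $O(\omega^3+\omega\delta)$ errors must be shown to remain genuinely subdominant to $\delta C_{ij}$, which is ensured by the uniform-in-$\epsilon$ operator bounds of Lemma~\ref{lemma11} together with a contraction argument for the corrector $\widetilde\phi$, exactly as in the single-bubble treatment of \cite{AFLZ2019} and the spherical-pair extension of \cite{ADY}.
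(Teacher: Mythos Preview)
Your proposal is correct and follows the same overall strategy as the paper: reduce $\ker\mathcal{A}(\omega,\delta)\neq\{0\}$ to the $2\times2$ eigenvalue problem $\overline{\mathcal{C}}\,\vec a=(\omega^2/\delta v_b^2)\,\vec a$ by projecting onto $\mathrm{span}\{\psi_1,\psi_2\}$, integrating the second row over $\partial D_i$, and balancing the scales.

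There is one stylistic difference worth noting. The paper first truncates the series \eqref{eq:S_series}--\eqref{eq:K_series} (keeping the $k_b^2\mathcal{K}_{D,2}$ term) and then invokes the three pre-packaged identities \eqref{k_properties} from \cite{AFLZ2019}, in particular $\int_{\partial D_i}\mathcal{K}_{D,2}[\varphi]\,d\sigma=-\int_{D_i}\mathcal{S}_D[\varphi]\,dx$, to evaluate the integrated left-hand side as $-k_b^2\int_{D_i}\mathcal{S}_D[\phi]$. You instead keep the full Helmholtz operator, use the jump relation $(-\tfrac12\mathcal{I}+\mathcal{K}_D^{k_b,*})[\phi]=\partial_\nu\mathcal{S}_D^{k_b}[\phi]|_-$, and apply the divergence theorem together with $(\Delta+k_b^2)\mathcal{S}_D^{k_b}[\phi]=0$ in $D_i$ to get $-k_b^2\int_{D_i}\mathcal{S}_D^{k_b}[\phi]$ directly. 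These are equivalent (your identity, expanded in $k_b$, reproduces the paper's identities), but your route is more self-contained since it does not require separately stating the $\mathcal{K}_{D,2}$ identity. Likewise, on the right-hand side the paper uses $\int_{\partial D_i}(\tfrac12\mathcal{I}+\mathcal{K}_D^*)[\psi]=\int_{\partial D_i}\psi$ and the definition \eqref{co1} of $C_{ij}$, whereas you use the exterior jump relation and the alternative form \eqref{eq:cap_reform}; again the same content via a slightly different bookkeeping.
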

\begin{proof}
The proof is the same as that of Lemma 4.1 of  \cite{ADY}. Suppose that $(\phi,\psi)$ is a solution to \eqref{eq:res} for small $\omega=\omega(\delta)$. From \eqref{eq:res} and the asymptotic expansions \eqref{eq:S_series} and \eqref{eq:K_series}, we have  
\begin{equation} \label{eq:A_matrix}
\begin{pmatrix}\S_D+k_b\S_{D,1} & -\S_D-k\S_{D,1} \\ -\frac{1}{2}I+\K_D^*+k_b^2\K_{D,2} & -\delta(\frac{1}{2}I+\K_D^*) \end{pmatrix}\begin{pmatrix} \phi \\ \psi \end{pmatrix} = \begin{pmatrix}  O(\omega^2) \\ O(\delta\omega+\omega^3)\end{pmatrix}.
\end{equation}
Therefore, 
\begin{equation}\label{eq:first}
\S_D[\phi-\psi]+k_b\S_{D,1}[\phi]-k\S_{D,1}[\psi]=O(\omega^2).
\end{equation}
Since $\S_D: L^2(\D) \rightarrow H^1(\D)$ is invertible \cite{AFGZbook2018}, and by virtue of Lemma \ref{lemma11}, $\|S_{D,1}\|_{B(L^2(\partial D),H^1(\partial D))}$ is bounded, it follows that $\phi=\psi+O(\omega)$ in $L^2(\partial D)$. Now by virtue of Lemma~2.1 of \cite{AFLZ2019} (where the boundary $\partial D=\partial D_1\cup\partial D_2$ is assumed to be Lipschitz), we have, for any $\varphi\in L^2(\D)$
\begin{equation} \label{k_properties}
\begin{split}
\int_{\D_i}\left(-\frac{1}{2}I+\K_D^{*}\right)[\varphi]\de\sigma=0,
\qquad&\int_{\D_i}\left(\frac{1}{2}I+\K_D^{*}\right)[\varphi]\de\sigma=\int_{\D_i}\varphi\de\sigma,\\
\int_{\D_i} \K_{D,2}[\varphi]\de\sigma&=-\int_{D_i}\S_D[\varphi]\de x.
\end{split}
\end{equation}
Using \eqref{eq:A_matrix} again, we get
\begin{equation}\label{eq:second}
\left(-\frac{1}{2}I+\K_D^*+k_b^2\K_{D,2}\right)[\phi]-\delta\left(\frac{1}{2}I+\K_D^*\right)[\psi]=O(\delta\omega+\omega^3) .
\end{equation}
Integrating \eqref{eq:second} over $\D_i$, using \eqref{k_properties} yields
\begin{equation} \label{eq:313}
-k_b^2\int_{D_i}\S_D[\phi]\de\sigma -\delta\int_{\D_i}\psi\de\sigma =O(\delta\omega+\omega^3),\quad\,i=1,2.
\end{equation}

In light of \eqref{kernel_basis}, a nontrivial solution $\phi$ to \eqref{eq:second} can be written as
\begin{equation} \label{eq:psi_basis}
\phi=a_1\psi_1+a_2\psi_2+O(\omega^2+\delta),
\end{equation}
for some constants $a_1$ and $a_2$. Then, combining $\phi=\psi+O(\omega)$, we have $\psi=a_1\psi_1+a_2\psi_2+O(\omega+\delta)$. Substituting this into \eqref{eq:313}, we have 
\begin{equation} \label{equ:relation}
\begin{cases}
 -k_b^2a_1|D_1|-\delta a_1\int_{\partial D_1}\psi_1  \de\sigma-\delta a_2\int_{\partial D_1}\psi_2\de\sigma=O(\omega^3+\delta\omega+\delta^2),\\
-k_b^2a_2|D_2|-\delta a_1\int_{\partial D_2}\psi_1  \de\sigma-\delta a_2\int_{\partial D_2}\psi_2\de\sigma=O(\omega^3+\delta\omega+\delta^2).
\end{cases}
\end{equation}
Thus, by using \eqref{co1} and \eqref{eq:cap_reform}, up to an error of order $O(\omega^3+\delta \omega+\delta^2)$, we reach the eigenvalue problem
\begin{equation}\label{eigenvalue}
\tilde{C}\begin{pmatrix}a_1\\a_2\end{pmatrix}
=\frac{k_b^2}{\delta}\begin{pmatrix}a_1\\a_2\end{pmatrix}.
\end{equation}
This implies $\omega_n=\sqrt{\delta v_b^2\lambda_n}+O(\delta)$.\; 
\end{proof}

\begin{remark}\label{remark15}
By \eqref{eigenvalue}, we know that $\frac{\omega_i^2}{\delta v_b^2}=\frac{k_b^2}{\delta}$ is the eigenvalue $\lambda_i$ of $\tilde{C}$, up to an error of order $O(\omega^3+\delta \omega+\delta^2)$, $i=1,2$, so we have $\omega_i^2= \delta v_b^2\lambda_i+O(\delta^2)$. We use $\omega_i$ to denote the resonant frequency that has a positive real part, which means that we write $\omega
_i$ as $\omega_i=\sqrt{\delta v_b^2\lambda_i}+O(\delta)$.  Here we would like to point out that in fact the resonant frequencies $\omega_1$ and $\omega_2$ are not real-valued,  their imaginary parts will appear in the $O(\delta)$ term. Because we are studying resonators in an unbounded domain, energy is lost to the far field meaning that the resonant frequencies have negative imaginary parts \cite{ad, ammari2018minnaert, AFLZ2019}. Hence, the leading-order terms in the expansions for $\omega_1$ and $\omega_2$ (given in Lemma \ref{lem:26}) are real valued and the imaginary parts will appear in higher-order terms in the expansion. However, by Lemma \ref{lemma11} only the leading-order terms in the asymptotic expansion \eqref{eq:S_series} and \eqref{eq:K_series} have singularities as the resonators are moved close together, we do not consider the higher-order expansions in this work.
\end{remark}

\subsection{Main Results}\label{subsec_MR}

Setting 
\begin{equation}
\sigma_1:=\overline{C}_{11}+\overline{C}_{12}=\frac{1}{|D_{1}|}(C_{11}+C_{12}),\quad \sigma_2:=\overline{C}_{22}+\overline{C}_{21}=\frac{1}{|D_{2}|}(C_{21}+C_{22}),
\end{equation}
by using \eqref{equ:C1112}, we have
\begin{equation}\label{sigma_i}
\frac{1}{C}<\sigma_i<C.
\end{equation}
Furthermore, let
\begin{equation}\label{def_C0}
C_{*}=\frac{\overline{C}_{11}\sigma_2+\overline{C}_{22}\sigma_1}{\overline{C}_{11}+\overline{C}_{22}}.
\end{equation}
Thus, by virtue of \eqref{sigma_i} and \eqref{equ:C1112}, there exists some constant $C>0$, independent of $\epsilon$, such that
\begin{equation}\label{sigma_1}
\frac{1}{C}<C_{*}<C.
\end{equation}

Thanks to the explicit calculation of the capacitance coefficients in Proposition \ref{the:1} and the relation between the subwavelength resonant frequencies and the capacitance proved in the Lemma \ref{lem:26}, we could compute the asymptotic expansions of the subwavelength resonant frequencies  in terms of $\epsilon$ and $\delta$ as $\delta\to0$. Now we state our main result as follows. It is valid for the convex resonators of arbitrary shape.

\begin{theorem}\label{the:main}
Let $D_{1}$ and $D_{2}$ be defined as above and satisfy  \eqref{h1h200}-\eqref{equ:D1D2}. Then as the contrast of the density $\delta=\frac{\rho_b}{\rho}\to0$, the resonant frequencies of $D_1$ and $D_2$, with sufficiently small separation distance $\epsilon$, are given by
\begin{equation}\label{equ:lambda_m}
\begin{aligned}
\omega_1&=\sqrt{\delta v_b^2 C_{*}}+O\left(\sqrt{\frac{\delta}{\rho_m(\epsilon)}}+\delta\right),  \\
\omega_2&=\sqrt{\delta v_b^2\Big(\frac{1}{|D_1|}+\frac{1}{|D_2|}\Big)\frac{\mathcal{L}_{m}}{\Lambda^{2/m}}\rho_{m}(\epsilon)}+O\left( \sqrt{\frac{\delta}{\rho_m(\epsilon)}}+\delta\right),
\end{aligned}
\end{equation}
where $\mathcal{L}_{m}$, $\rho_{m}(\epsilon)$, $C_{*}$ are defined in \eqref{Emn}, \eqref{gamma_m} and \eqref{def_C0}, respectively. 
\end{theorem}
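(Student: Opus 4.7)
\medskip

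\noindent\textbf{Proof plan for Theorem \ref{the:main}.}

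The plan is to combine Lemma \ref{lem:26}, which reduces the subwavelength resonant frequencies to $\omega_n=\sqrt{\delta v_b^{2}\lambda_n}+O(\delta)$ where $\lambda_n$ are the eigenvalues of the rescaled capacitance matrix $\overline{\mathcal C}=(\overline C_{ij})$, with the two pieces of asymptotic information supplied by Proposition \ref{the:1}: part (i) tells us that the diagonal entries $C_{ii}$ blow up like $\tfrac{\mathcal L_m}{\Lambda^{2/m}}\rho_m(\epsilon)$, while part (ii) constrains each row sum $C_{i1}+C_{i2}=|D_i|\sigma_i$ to remain bounded between two positive constants. The only substantive work, then, is an asymptotic expansion of the two eigenvalues of $\overline{\mathcal C}$ as $\epsilon\to 0$.

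First I would use the row-sum identities $\overline C_{12}=\sigma_1-\overline C_{11}$ and $\overline C_{21}=\sigma_2-\overline C_{22}$ to eliminate the off-diagonal entries from the discriminant in \eqref{equ:eig}. A short algebraic manipulation gives
\begin{equation*}
(\overline C_{11}-\overline C_{22})^{2}+4\overline C_{12}\overline C_{21}
=S^{2}-4\bigl(\sigma_{2}\overline C_{11}+\sigma_{1}\overline C_{22}\bigr)+4\sigma_{1}\sigma_{2},
\end{equation*}
where $S:=\overline C_{11}+\overline C_{22}$. By Proposition \ref{the:1}(i), $S=\bigl(\tfrac{1}{|D_1|}+\tfrac{1}{|D_2|}\bigr)\tfrac{\mathcal L_m}{\Lambda^{2/m}}\rho_m(\epsilon)+O(1)$, which is large, while the correction terms are only $O(S)$ and $O(1)$. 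Factoring $S$ out of the square root and applying a standard Taylor expansion $\sqrt{1-t}=1-t/2+O(t^{2})$ with $t=O(1/S)$ gives
\begin{equation*}
\sqrt{(\overline C_{11}-\overline C_{22})^{2}+4\overline C_{12}\overline C_{21}}
=S-\frac{2(\sigma_{2}\overline C_{11}+\sigma_{1}\overline C_{22})}{S}+O\!\Bigl(\frac{1}{S}\Bigr).
\end{equation*}

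Substituting into \eqref{equ:eig} and using the definition \eqref{def_C0} of $C_*$, the cancellation of the $S$-terms in $\lambda_1=\tfrac12(S-\sqrt{\cdot})$ produces
\begin{equation*}
\lambda_{1}=C_{*}+O\!\Bigl(\tfrac{1}{\rho_{m}(\epsilon)}\Bigr),\qquad
\lambda_{2}=S-C_{*}+O\!\Bigl(\tfrac{1}{\rho_{m}(\epsilon)}\Bigr)=\Bigl(\tfrac{1}{|D_{1}|}+\tfrac{1}{|D_{2}|}\Bigr)\frac{\mathcal L_{m}}{\Lambda^{2/m}}\rho_{m}(\epsilon)+O(1).
\end{equation*}
Plugging these into $\omega_n=\sqrt{\delta v_b^{2}\lambda_n}+O(\delta)$ and using $\sqrt{a+b}=\sqrt{a}+O(b/\sqrt{a})$ in each case yields the claimed expansions, with error $\sqrt{\delta/\rho_m(\epsilon)}$ coming from the correction in $\lambda_n$ and the $O(\delta)$ inherited directly from Lemma \ref{lem:26}.

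The delicate step, and the only place where care is really needed, is the extraction of $\lambda_1$: it appears as the difference of two large quantities $\tfrac12(S-\sqrt{D})$, so one must expand the square root to second order to guarantee that the leading $O(S)$ term cancels exactly and that the remainder is controlled by $O(1/S)=O(1/\rho_m(\epsilon))$ rather than $O(1)$. Once this cancellation is performed cleanly, matching the resulting combination $(\sigma_2\overline C_{11}+\sigma_1\overline C_{22})/S$ with the definition \eqref{def_C0} of $C_*$ is immediate, and the bound $\tfrac1C<C_*<C$ from \eqref{sigma_1} confirms that $\omega_1$ stays bounded away from $0$ on the $\sqrt{\delta}$ scale, while $\omega_2$ additionally diverges as a function of $\epsilon$ through $\rho_m(\epsilon)$, explaining the two different asymptotic regimes advertised in the abstract.
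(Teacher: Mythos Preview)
Your proposal is correct and follows essentially the same route as the paper's own proof: eliminate the off-diagonal entries via the row-sum identities $\overline C_{12}=\sigma_1-\overline C_{11}$, $\overline C_{21}=\sigma_2-\overline C_{22}$, factor $S=\overline C_{11}+\overline C_{22}$ out of the discriminant, Taylor-expand $\sqrt{1-t}$ with $t=O(1/S)$, and then feed the resulting $\lambda_1=C_*+O(1/\rho_m(\epsilon))$ and $\lambda_2=S-C_*+O(1/\rho_m(\epsilon))$ into Lemma~\ref{lem:26}. Your identification of the cancellation in $\lambda_1$ as the one delicate point is exactly right, and the error bookkeeping matches the statement of the theorem.
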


\begin{remark}
Consequently, since $k_{b}=\frac{\omega}{v_{b}}$, it follows from \eqref{equ:lambda_m} that the corresponding wavenumber satisfying
$$k_{b1}\sim \sqrt{\delta}, \quad\mbox{and}~~~ k_{b2}\sim \sqrt{\delta\rho_{m}(\epsilon)}.$$
 \end{remark}

\begin{remark}If we choose $\epsilon$ as an appropiate function of $\delta$, then we can obtain the asymptotic behavior of $\omega_n$. For example, if we choose $\epsilon\sim e^{-\delta^{\beta-1}}$  for $m=2$;  while $\epsilon\sim \delta^{(1-\beta)/(1-\frac{2}{m})}$ for $m>2$, where $0<\beta<1$, then we have $\omega_1\sim\sqrt\delta$, and $\omega_2\sim\sqrt{\delta^{\beta}}$.
 \end{remark}

\begin{proof}[Proof of Theorem \ref{the:main}]
Substituting $\overline{C}_{12}=\sigma_1-\overline{C}_{11}$ and $\overline{C}_{21}=\sigma_2-\overline{C}_{22}$
into \eqref{equ:eig} yields
\begin{align}\label{equ:sigma_2}
\lambda_n&=\frac{1}{2}\left(\overline{C}_{11}+\overline{C}_{22}+(-1)^n\sqrt{(\overline{C}_{11}+\overline{C}_{22})^2-4\sigma_1\overline{C}_{22}-4\sigma_2\overline{C}_{11}+4\sigma_1\sigma_2}\right) \nonumber\\
&=\frac{\overline{C}_{11}+\overline{C}_{22}}{2}\left(1+(-1)^n\sqrt{1-\frac{4\overline{C}_{11}\sigma_2+4\overline{C}_{22}\sigma_1-4\sigma_1\sigma_2}{(\overline{C}_{11}+\overline{C}_{22})^2}}\right).
\end{align}
Due to Proposition \ref{the:1}, and \eqref{sigma_1}, we know that
$$\left|\frac{4\overline{C}_{11}\sigma_2+4\overline{C}_{22}\sigma_1-4\sigma_1\sigma_2}{(\overline{C}_{11}+\overline{C}_{22})^2}\right|\ll 1,\quad\mbox{if}~\epsilon\ll 1.$$
Then, by using the Taylor expansion, $\sqrt{1-x}=1-\frac{1}{2}x+\frac{1}{8}x^{2}+O(x^{3})$, at $x=0$,  
\begin{equation*}\label{lambda1}
\lambda_1=\frac{\overline{C}_{11}\sigma_2+\overline{C}_{22}\sigma_1}{\overline{C}_{11}+\overline{C}_{22}}+O(\frac{1}{C_{11}+C_{22}})=C_{*}+O(\frac{1}{C_{11}+C_{22}}), 
\end{equation*}
and
\begin{equation*}\label{lambda2}
\lambda_2=\overline{C}_{11}+\overline{C}_{22}-\frac{\overline{C}_{11}\sigma_2+\overline{C}_{22}\sigma_1}{\overline{C}_{11}+\overline{C}_{22}}+O(\frac{1}{C_{11}+C_{22}})= \overline{C}_{11}+\overline{C}_{22}-C_{*}+O(\frac{1}{C_{11}+C_{22}}).
\end{equation*}
By virtue of Proposition \ref{the:1}, Lemma \ref{lem:26} and \eqref{sigma_1} again, we finish the proof  of Theorem \ref{the:main}.
\end{proof}
 
\begin{remark}\label{remark1}
Because $\rho_m(\epsilon)\gg1$ when $\epsilon\ll1$, we can deduce from \eqref{equ:lambda_m} that the two subwavelength resonant frequencies have different asymptotic behaviors. It shows that $|\omega_i(\delta)|\to 0$ as $\delta\to0$ under the assumption of Theorem \ref{the:main}, but $\frac{|\omega_2|}{|\omega_1|}\sim\rho_m(\epsilon)\gg1$ by \eqref{equ:lambda_m}, which means that $\omega_1$ and $\omega_2$ are of different scales.     
\end{remark}

\begin{remark}\label{rem1.8}  Theorem \ref{the:main} also holds for the ellipsoidal resonators. After a rotation of the coordinates if necessary, if
\begin{equation*}\label{h1h23}
(h_{1}-h_{2})(x')=\Lambda_1x_1^2+\Lambda_2x_2^2+O(|x'|^{2+\alpha}),\quad\,|x'|\leq\,2R_{0},
\end{equation*}
where $\mathrm{diag}(\Lambda_{1},\Lambda_2)=\nabla_{x'}^{2}(h_{1}-h_{2})(0')$, then the asymptotics of $\omega_2$ in \eqref{equ:lambda_m} is replaced by $$\omega_2=\sqrt{\delta v_b^2(\frac{1}{|D_1|}+\frac{1}{|D_2|})\frac{\pi}{\sqrt{\Lambda_1\Lambda_2}}|\log\epsilon|}+O\left( \sqrt{\frac{\delta}{|\log\epsilon|}}+\delta\right).$$ 
Thus, we can find the effect from the Gaussian curvature. For more details, see \cite{LLY}.
\end{remark}

The rest of this paper is organized as follows. We first give some preliminary results for the Dirichlet problem of the Laplace equation in an exterior domain and then extend the gradient estimates result in \cite{L,LLY} to exterior domains in Section \ref{sec2}. In Section \ref{sec3}, we mainly prove Proposition \ref{the:1}. Finally, we present the corresponding gradient blow-up results in Section \ref{sec4}.

\section{Preliminaries}\label{sec2}

This section is devoted to proving some basic results for the problem in the exterior domains $\mathbb{R}^{3}\backslash\overline{D_1\cup D_2}$ or $\R^3\backslash\overline{D_1^0\cup D_2^0}$. For simplicity, we assume that
$$(h_1-h_2)(x')=\Lambda|x'|^m,\quad m\geq2, \quad\mbox{for}~~ |x'|<2R_0.$$

\begin{lemma}\label{prop:maxv1}
Suppose $v_i$ is a solution of \eqref{equ:vj}. Then $0< v_i<1$ in $\mathbb{R}^{3}\backslash\overline{D_1\cup D_2}$. 
\end{lemma}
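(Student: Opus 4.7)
The plan is to prove $0<v_i<1$ in $\Omega = \R^3\setminus\overline{D_1\cup D_2}$ by a standard maximum principle argument, but one must be careful because $\Omega$ is unbounded; the decay condition $v_i(x)=O(1/|x|)$ as $|x|\to\infty$ from \eqref{equ:vj} is what makes the argument go through.

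First I would establish the two-sided bound $0\le v_i\le 1$ by truncation. Fix a large $R>0$ so that $\overline{D_1\cup D_2}\subset B_R(0)$, and set $\Omega_R := B_R(0)\setminus \overline{D_1\cup D_2}$. On this bounded domain $v_i$ is harmonic and continuous up to the boundary, so the classical maximum and minimum principles give
\begin{equation*}
\min_{\partial \Omega_R} v_i \;\le\; v_i(x) \;\le\; \max_{\partial \Omega_R} v_i, \qquad x\in \Omega_R.
\end{equation*}
On $\partial\Omega_R = \partial D_1\cup \partial D_2\cup \partial B_R$ the boundary values are $v_i=1$ on $\partial D_i$, $v_i=0$ on $\partial D_j$ for $j\ne i$, and on $\partial B_R$ the decay gives $|v_i|\le C/R\to 0$. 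Letting $R\to\infty$ with $x$ fixed yields $0\le v_i(x)\le 1$ for every $x\in \Omega$.

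Next I would upgrade the non-strict bounds to strict inequalities using the strong maximum principle. Since $\Omega$ is connected (the exterior of two disjoint compact convex sets in $\R^3$) and $v_i$ is harmonic and non-constant there (it equals $1$ on $\partial D_i$ and $0$ on $\partial D_j$, $j\neq i$), the strong maximum principle applied to $v_i$ and to $-v_i$ forbids $v_i$ from attaining either the value $1$ or the value $0$ at an interior point. Therefore $0<v_i(x)<1$ for all $x\in\Omega$, completing the proof.

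No real obstacle is expected here; the only mild care needed is the truncation step to justify the maximum principle on the unbounded domain, and observing that the continuous attainment of $v_i=1$ on $\partial D_i$ together with harmonicity is enough to rule out constancy and thus to invoke the strong maximum principle.
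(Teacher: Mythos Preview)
Your proposal is correct and follows essentially the same truncation-plus-maximum-principle approach as the paper. In fact, your explicit invocation of the strong maximum principle to upgrade $0\le v_i\le 1$ to the strict inequalities is more complete than the paper's own proof, which stops after obtaining $-\epsilon'<v_i(x_0)\le 1$ for arbitrary $\epsilon'>0$ and declares the proof finished without addressing strictness.
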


\begin{proof}
Fix $x_0\in \mathbb{R}^{3}\backslash\overline{D_1\cup D_2}$. By virtue of the fact that $v_i\sim O(\frac{1}{|x|})$ as $|x|\to \infty$, then for any  $0<\epsilon'<1$, we can choose sufficiently large $R_{\epsilon'}>|x_0|$, such that $|v_i(x)|_{x\in\partial B_R}<\epsilon'$. Because
$$\Delta v_{i}=0,\quad\mbox{in}~~ \mathbb{R}^{3}\backslash\overline{D_1\cup D_2},$$ then, by using the maximum principle in the region $B_R\backslash(D_1\cup D_2)$, we have $-\epsilon'< v_i(x_0)\leq1$. Because $\epsilon'$ and $x_0$ are arbitrary, the proof is finished.
\end{proof}

\begin{lemma}\label{green}
Suppose $v_i$ is a solution of \eqref{equ:vj}. Then for sufficiently large $r_{0}$, such that $D_1\cup D_2\subset B_{r_{0}}(0)$,  we have 
\begin{equation}\label{equ:u_i}
v_i(x)=\int_{\partial B_{r_{0}}(0)}P(x,y)v_i(y)\mathop{}\!\mathrm{d}S_y,\quad\mbox{for}~ x\in \R^3\backslash B_{r_{0}}(0),
\end{equation}
where the Poison kernel $P(x,y)=\frac{\partial G}{\partial \nu}$, $G(x,y)=K(x-y)-K(\frac{|x|}{r_{0}}(x^*-y))$, $K(x)$ is the fundamental solution of the Laplace equation in dimension three, and $x^*=(\frac{r_{0}^2}{|x|^2})x$. Moreover, the following derivative estimates hold when $|x|\to\infty$,
\begin{equation}\label{gradient_infty}
|\nabla v_i|\sim O(\frac{1}{|x|^2}),\quad |\nabla^2 v_i|\sim O(\frac{1}{|x|^3}),\quad\mbox{as}~  |x|\to\infty.
\end{equation}
\end{lemma}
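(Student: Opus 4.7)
The plan is to represent $v_i$ outside a large ball $B_{r_{0}}$ via the Poisson formula for the exterior Dirichlet problem and then differentiate the explicit kernel under the integral sign to read off the decay of $\nabla v_i$ and $\nabla^{2}v_i$.

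First I would verify that $G(x,y)$ is indeed the Green's function for $\mathbb{R}^{3}\setminus\overline{B_{r_{0}}}(0)$. The correction term $K\bigl(\tfrac{|x|}{r_{0}}(x^{*}-y)\bigr)$ is the standard Kelvin image of the source at $y$: its only singularity as a function of $x$ occurs when $x^{*}=y$, i.e.\ at the inverse point $r_{0}^{2}y/|y|^{2}$, which lies strictly inside $B_{r_{0}}$ when $y$ is outside; hence it is harmonic in $x$ on the exterior domain, and $\Delta_x G(\cdot,y)=\delta_y$ there. The elementary identity $|x|\,|x^{*}-y|=r_{0}\,|x-y|$ (valid whenever $|x|=r_{0}$, since then $x^{*}=x$, or whenever $|y|=r_{0}$) shows $G(x,y)=0$ on $\partial B_{r_{0}}$, while both terms decay as $|x|\to\infty$. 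Computing $-\tfrac{1}{r_{0}}y\cdot\nabla_{y}G(x,y)$, with the outward normal to the \emph{exterior} region at $y\in\partial B_{r_{0}}$ given by $-y/r_{0}$, then yields the explicit Poisson kernel
\[
P(x,y)\;=\;\frac{|x|^{2}-r_{0}^{2}}{4\pi r_{0}\,|x-y|^{3}}, \qquad x\in\mathbb{R}^{3}\setminus\overline{B_{r_{0}}},\ y\in\partial B_{r_{0}}.
\]

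Next I would set $w(x):=\int_{\partial B_{r_{0}}}P(x,y)\,v_i(y)\,dS_y$ and invoke the standard properties of the exterior Poisson integral: $w$ is harmonic in $\mathbb{R}^{3}\setminus\overline{B_{r_{0}}}$, its non-tangential boundary values on $\partial B_{r_{0}}$ coincide with $v_i|_{\partial B_{r_{0}}}$ (which is smooth since $\partial B_{r_{0}}$ lies in the region where $v_i$ is harmonic), and $w(x)=O(1/|x|)$ at infinity because $P(x,y)=O(1/|x|)$ uniformly for $y\in\partial B_{r_{0}}$ and $v_i$ is bounded on $\partial B_{r_{0}}$ by Lemma \ref{prop:maxv1}. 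Both $v_i$ and $w$ are harmonic on $\mathbb{R}^{3}\setminus\overline{B_{r_{0}}}$, share the same boundary data, and decay at infinity; applying the maximum principle to $\pm(v_i-w)$ on the annuli $B_R\setminus\overline{B_{r_{0}}}$ and letting $R\to\infty$ therefore forces $v_i\equiv w$, which proves \eqref{equ:u_i}.

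For the decay estimates \eqref{gradient_infty} I would differentiate under the integral sign. For $|x|\geq 2r_{0}$ and $y\in\partial B_{r_{0}}$ one has $|x-y|\geq|x|/2$, and the explicit formula for $P$ immediately gives $|\nabla_{x}P(x,y)|\leq C/|x|^{2}$ and $|\nabla_{x}^{2}P(x,y)|\leq C/|x|^{3}$ by elementary calculus; combining with the uniform bound $0<v_i<1$ from Lemma \ref{prop:maxv1} and integrating over the bounded surface $\partial B_{r_{0}}$ then yields the claimed asymptotic estimates. There is no deep obstacle in this argument; the points deserving care are the sign convention for the outward normal to the \emph{unbounded} domain (so that $P>0$ and $w$ attains the correct boundary values) and the uniqueness step, where the explicit decay of $v_i$ at infinity is essential to rule out the addition of a nonzero harmonic function such as a nonzero constant.
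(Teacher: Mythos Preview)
Your proposal is correct and follows essentially the same route as the paper: define the exterior Poisson integral, verify harmonicity, boundary values, and $O(1/|x|)$ decay, then conclude $v_i$ equals this integral by the maximum principle on exhausting annuli, and finally differentiate the explicit kernel to obtain \eqref{gradient_infty}. The paper's argument is terser (it simply cites Theorem~2.6 of \cite{GT} for the boundary behavior and says ``by a direct computation'' for the derivative bounds), whereas you spell out the Kelvin-image verification and the normal-sign convention; but the strategy is identical.
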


\begin{proof}
For the solution $v_{i}$ of \eqref{equ:vj}, we define $$u_i(x)=\int_{\partial B_{r_{0}}}P(x,y)v_i(y)\mathrm{d}S_y,\quad\mbox{for}~ x\in \R^3\backslash B_{r_{0}}(0).$$
Clearly, 
$$\Delta u_i=0,\quad\mbox{in}~~ \R^3\backslash \overline{B_{r_{0}}(0)}.$$
By the same way as in the proof of theorem 2.6 in \cite{GT}, for $x_0\in\partial B_r$, we have
$$ u_i(x)\to v_i(x_0)\quad\mbox{as}~ x\to x_0, ~x\in \R^3\backslash \overline{B_{r_{0}}(0)}.$$
For $x\in \R^3\backslash \overline{B_{r_{0}}(0)}$, $P(x,y)=\frac{|x|^2-r_{0}^2}{\omega_3r_{0}}\frac{1}{|x-y|^3}$, in view of Lemma \ref{prop:maxv1}, $0\leq v_i\leq1$, we have 
$$u_i(x)\sim\frac{1}{|x|},\quad\mbox{ as}~  x\to \infty.$$
Thus, the differences $w_{i}:=u_i-v_i$ verify
\begin{equation*}
\begin{cases}
\Delta w_{i}=0,\quad&\mbox{in}~  \R^3 \backslash B_{r_{0}}(0),\\
 w_{i}=0,\quad&\mbox{on}~ \partial B_{r_{0}}(0),\\
 w_{i}(x)=O(|x|^{-1}),\quad &\mbox{as} ~ |x|\rightarrow \infty,
\end{cases} 
\end{equation*}
which implies $v_i(x)=u_i(x)$ in $\R^3 \backslash B_{r_{0}}(0)$. Then by a direct computation, we get \eqref{gradient_infty}.
\end{proof}

For $0\leq\,r\leq\,2R_{0}$, let
$$ \Omega_r:=\left\{(x',x_{3})\in \mathbb{R}^{3}~\big|~h_{2}(x')<x_{3}<\varepsilon+h_{1}(x'),~|x'|<r\right\}.$$
To prove Propostion \ref{the:1}, we introduce the Keller-type function $\bar{v}_{1}\in{C}^{2,\alpha}(\mathbb{R}^{n})$, {\color{red}as} in \cite{L,bll} for instance, such that $\bar{v}_{1}=1$ on $\partial{D}_{1}$, $\bar{v}_{1}=0$ on $\partial{D}_{2}$, and $\bar{v}_1(x)=O(\frac{1}{|x|})$ as $|x|\to \infty$, especially,
\begin{align}\label{ubar}
&\bar{v}_{1}(x)
=\frac{x_{3}-h_{2}(x')}{d(x')},\quad\mbox{in}~~~\Omega_{2R_{0}},
\end{align}
where $d(x'):=\varepsilon+(h_{1}-h_{2})(x')$, and satisfies
\begin{align}\label{vbar2}
0<\bar{v}_{1}<1,\quad\mbox{in}~\Omega,\quad \int_{\Omega\backslash\Omega_{R_0/2}}|\Delta \bar{v}_1|<C, \quad\mbox{and}~\|\bar{v}_{1}\|_{C^{2,\alpha}(\Omega\setminus \Omega_{R_{0}})}\leq\,C.
\end{align}

In view of \eqref{ubar}, a direct calculation gives
\begin{equation}\label{nablau_bar}
\begin{split}
\left|\partial_{x_{j}}\bar{v}_{1}(x)\right|&\leq\frac{C|x'|^{m-1}}{\varepsilon+(h_{1}-h_{2})(x')},~~j=1,2,\\
\quad
\partial_{x_{3}}\bar{v}_{1}(x)&=\frac{1}{\varepsilon+(h_{1}-h_{2})(x')},
\end{split}\qquad\qquad~x\in\Omega_{2R_{0}}.
\end{equation}

Although $v_1$ is defined in an unbounded domain in this paper, the same result as Proposition 1.7 in \cite{L} holds.  
\begin{lemma}\label{prop_v1}
Suppose $v_1$ is a solution of \eqref{equ:vj}. Then
\begin{equation}\label{nabla_w_in}
\|\nabla(v_{1}-\bar{v}_{1})\|_{L^{\infty}(\Omega )}\leq\,C.
\end{equation}
\end{lemma}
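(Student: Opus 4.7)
The plan is to set $w:=v_{1}-\bar{v}_{1}$ and observe that, by \eqref{equ:vj} and the construction of $\bar{v}_{1}$, the function $w$ satisfies the exterior Dirichlet problem
\[
\begin{cases}
-\Delta w=\Delta\bar{v}_{1}, & \text{in } \Omega,\\
w=0, & \text{on } \partial D_{1}\cup\partial D_{2},\\
w(x)=O(|x|^{-1}), & \text{as } |x|\to\infty,
\end{cases}
\]
where the decay at infinity is a consequence of Lemma \ref{green} applied to $v_{1}$, together with the prescribed decay of $\bar{v}_{1}$. The essential point is that $w$ satisfies a Poisson equation with a right-hand side that is bounded outside the neck $\Omega_{R_{0}/2}$ (by \eqref{vbar2}) and whose $L^{1}$ norm on all of $\Omega$ is uniformly controlled in $\varepsilon$ (one checks this directly in the neck from the formulas for $\partial_{x_{j}}^{2}\bar{v}_{1}$ using \eqref{equ:D1D2} and $d(x')\ge c(\varepsilon+|x'|^{m})$). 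This is exactly the setup of Proposition~1.7 of \cite{L} except that the domain is now unbounded.

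First I would establish the global energy estimate $\int_{\Omega}|\nabla w|^{2}\le C$. Testing the equation against $w$ on $\Omega\cap B_{R}$ and integrating by parts gives
\[
\int_{\Omega\cap B_{R}}|\nabla w|^{2}
=\int_{\Omega\cap B_{R}}w\,\Delta\bar{v}_{1}
+\int_{\partial B_{R}}w\,\partial_{\nu}w,
\]
since $w=0$ on $\partial D$. Because $|w|\le 2$ by Lemma~\ref{prop:maxv1} and the maximum principle applied to $\bar{v}_{1}$, and $\int_{\Omega}|\Delta\bar{v}_{1}|\le C$, the first term is uniformly bounded. The boundary term over $\partial B_{R}$ vanishes as $R\to\infty$ thanks to the decay $w=O(|x|^{-1})$ and $\nabla w=O(|x|^{-2})$ from Lemma~\ref{green} applied to $v_{1}$ and the explicit decay of $\bar{v}_{1}$.

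Next I would split $\Omega$ into three pieces and estimate $\nabla w$ in each. In the narrow neck $\Omega_{R_{0}}$, since the arguments of \cite{L,LLY} (via iteration on a dyadic decomposition of $\Omega_{R_{0}}$ into cubes of side comparable to the local thickness $d(x')$, combined with a standard rescaling that brings each cube to unit size and controls $\|\nabla w\|_{L^{\infty}}$ by the energy $\int|\nabla w|^{2}$ on a slightly enlarged cube plus the sup of the rescaled right-hand side) are purely local and scale-invariant, they transplant verbatim once the global $H^{1}$ bound of Step~1 is in hand; they give $\|\nabla w\|_{L^{\infty}(\Omega_{R_{0}})}\le C$. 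In the intermediate annular region $(\Omega\setminus\Omega_{R_{0}})\cap B_{r_{0}}$, the coefficient $\Delta\bar{v}_{1}$ is bounded by \eqref{vbar2} and the boundary is smooth and separated from itself, so boundary $C^{1,\alpha}$ Schauder estimates yield $\|\nabla w\|_{L^{\infty}}\le C(\|w\|_{L^{\infty}}+\|\Delta\bar{v}_{1}\|_{L^{\infty}})\le C$. In the far-field region $\R^{3}\setminus B_{r_{0}}$, I would use Lemma~\ref{green} to bound $|\nabla v_{1}|\le C|x|^{-2}$ together with the definition of $\bar{v}_{1}$ outside the neck to get $|\nabla w|\to 0$ at infinity, so the sup there is trivially controlled by a universal constant.

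The main obstacle compared with the bounded-domain case of \cite{L,LLY} is justifying that the test-function identity used in Step~1 really closes up (no contribution at infinity), and that the cut-off/rescaling arguments in the neck are not affected by the domain being unbounded. Both issues are handled by the decay estimates in Lemma~\ref{green}, which are the exterior-domain analogue of the standard interior regularity used in \cite{L,LLY}; once that decay is available, the rest is a routine adaptation of the bounded-domain argument.
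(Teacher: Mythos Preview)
Your proposal is correct and follows essentially the same strategy as the paper: both set $w=v_{1}-\bar v_{1}$, control the region $\Omega\setminus\Omega_{R_{0}/2}$ via the decay from Lemma~\ref{green} together with standard elliptic regularity, and then invoke the local neck argument of Proposition~1.7 in \cite{L}; the paper simply compresses your intermediate and far-field steps into a single $C^{2,\alpha}$ bound and leaves the global energy estimate implicit in the citation to \cite{L}. One minor correction: the bound $0<\bar v_{1}<1$ comes from the construction \eqref{vbar2}, not from a maximum principle, since $\bar v_{1}$ is not harmonic.
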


\begin{proof}
Since $v_{1}-\bar{v}_{1}$ satisfies
\begin{equation*}
\left\{
\begin{aligned}
\Delta (v_{1}-\bar{v}_{1})&=-\Delta \bar{v}_{1},\quad\quad\mbox{in}~   \mathbb{R}^{3}\backslash\overline{D_1\cup D_2},\\
 v_{1}-\bar{v}_{1}&=0,\,\quad\quad\quad\quad\mbox{on}~  \partial D_1\cup\partial D_2,\\
 v_{1}-\bar{v}_{1}&=O(|x|^{-1}),\quad\mbox{as}~   |x|\to \infty.\\
\end{aligned} 
\right.
\end{equation*}
By using Lemma \ref{prop:maxv1}, Lemma \ref{green}, and \eqref{vbar2}, applying the standard elliptic regularity theory, we have 
$$\|v_{1}-\bar{v}_{1}\|_{C^{2,\alpha}(\Omega\setminus \Omega_{R_{0}/2})}\leq\,C.$$
Then employing the same argument as the proof of proposition 1.7 in \cite{L} yields \eqref{nabla_w_in}.
\end{proof}

An immediate consequence of Lemma \ref{prop_v1} is as follows.

\begin{corollary} Under the assumptions of Lemma \ref{prop_v1}, we have
\begin{equation}\label{nabla_v1_in}
\frac{1}{Cd(x')}\leq|\nabla{v}_{1}(x',x_{3})|\leq\frac{C}{d(x')},\quad\qquad~(x',x_{3})\in\Omega_{R_{0}},
\end{equation}
and
\begin{equation}\label{nabla_v1_o}
\|\nabla v_{1}\|_{L^{\infty}(\Omega\setminus\Omega_{R_{0}})}\leq\,C,
\end{equation}
where $d(x')$ is defined in \eqref{ubar}.
\end{corollary}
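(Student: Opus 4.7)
The plan is to deduce both estimates from Lemma~\ref{prop_v1}, which says $\|\nabla(v_{1}-\bar{v}_{1})\|_{L^{\infty}(\Omega)}\le C$, combined with the explicit form of the Keller-type function $\bar{v}_{1}$ in \eqref{ubar}--\eqref{nablau_bar} and its $C^{2,\alpha}$-control away from the gap recorded in \eqref{vbar2}. Writing $\nabla v_{1}=\nabla\bar{v}_{1}+\nabla(v_{1}-\bar{v}_{1})$, the size of $\nabla v_{1}$ is governed by $\nabla\bar{v}_{1}$ modulo a uniformly bounded additive error, so the task reduces to reading off pointwise information about $\bar{v}_{1}$ in each region.

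For \eqref{nabla_v1_o} I would simply invoke \eqref{vbar2}, which gives $|\nabla\bar{v}_{1}|\le C$ on $\Omega\setminus\Omega_{R_{0}}$, and combine it with Lemma~\ref{prop_v1} via the triangle inequality. For the upper bound in \eqref{nabla_v1_in}, the identity $\partial_{x_{3}}\bar{v}_{1}=1/d(x')$ together with $|\partial_{x_{j}}\bar{v}_{1}|\le C|x'|^{m-1}/d(x')\le CR_{0}^{m-1}/d(x')$ for $j=1,2$ gives $|\nabla\bar{v}_{1}|\le C/d(x')$. Since $d(x')\le \varepsilon+\Lambda R_{0}^{m}+O(R_{0}^{m+1})\le C$ on $\Omega_{R_{0}}$, the additive $O(1)$ term from Lemma~\ref{prop_v1} is absorbed into $C/d(x')$, yielding $|\nabla v_{1}|\le C/d(x')$.

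For the matching lower bound I would exploit the fact that $\partial_{x_{3}}\bar{v}_{1}=1/d(x')$ is already an exact pointwise identity. By Lemma~\ref{prop_v1}, $|\partial_{x_{3}}(v_{1}-\bar{v}_{1})|\le C$, so
$$|\nabla v_{1}|\ge|\partial_{x_{3}}v_{1}|\ge \frac{1}{d(x')}-C,$$
which is at least $1/(2d(x'))$ provided $d(x')\le 1/(2C)$. Fixing the constant $C$ from Lemma~\ref{prop_v1} first and then shrinking the universal constant $R_{0}$ so that $\varepsilon+\Lambda R_{0}^{m}+O(R_{0}^{m+1})\le 1/(2C)$ throughout $\Omega_{R_{0}}$ (which is possible since $R_{0}$ is a universal constant and $\varepsilon$ is small), this inequality holds on all of $\Omega_{R_{0}}$ and gives the claimed lower bound $|\nabla v_{1}|\ge 1/(Cd(x'))$.

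The main obstacle is precisely this calibration of constants: the argument produces the scaling $1/(Cd(x'))$ only once $R_{0}$ has been chosen small enough relative to the constant in Lemma~\ref{prop_v1}, because otherwise the bounded additive error from $\nabla(v_{1}-\bar{v}_{1})$ could swamp the singular leading term $1/d(x')$ in the intermediate regime $d(x')\sim 1$. The smoothness assumption \eqref{h1h2} on $\partial D_{i}$ ensures that this shrinking of $R_{0}$ can be performed uniformly in $\varepsilon$, which is what allows both bounds in \eqref{nabla_v1_in}--\eqref{nabla_v1_o} to hold with a universal constant.
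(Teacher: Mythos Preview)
Your argument is correct and is exactly the intended one: the paper offers no proof beyond declaring the Corollary ``an immediate consequence of Lemma~\ref{prop_v1},'' and your deduction via the decomposition $\nabla v_{1}=\nabla\bar v_{1}+\nabla(v_{1}-\bar v_{1})$ together with \eqref{nablau_bar} and \eqref{vbar2} is precisely how that immediacy is meant to be unpacked. Your discussion of the calibration of $R_{0}$ against the constant from Lemma~\ref{prop_v1} for the lower bound is the right way to close the argument, and matches the standard treatment in the references \cite{L,bll}.
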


In order to investigate the asymptotic expansions of $C_{ii}$, we need to consider the following limit problem
\begin{equation}\label{equ:v10}
\begin{cases}
\Delta v_{1}^{0}=0, &\mbox{in}~  \R^3\backslash\overline{D_1^0\cup D_2^0},\\
 v_{1}^{0}=1, &\mbox{on}~  \partial D_1^0\backslash\{0\},\\
 v_{1}^{0}=0, &\mbox{on}~  \partial D_2^0,\\
 v_{1}^{0}(x)=O(\frac{1}{|x|}),   & |x|\rightarrow \infty.
\end{cases} 
\end{equation}

Similarly as lemma 3.1 in \cite{bly}, we have

\begin{lemma}\label{lem:lim}
There exists a unique solution $v_{1}^{0}\in L^{\infty}(\R^3\backslash\overline{D_1^0\cup D_2^0})\cap C^1(\R^3\backslash(D_1^0\cup D_2^0\cup\{0\}))\cap C^{2}(\R^3\backslash\overline{D_1^0\cup D_2^0})$ to \eqref{equ:v10}.
\end{lemma}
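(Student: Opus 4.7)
The plan is to construct $v_1^0$ as a subsequential limit of the $\varepsilon$-family $v_1^\varepsilon$ solving \eqref{equ:vj} with $i=1$, and then to establish uniqueness via a removable-singularity argument at the touching point. The prior lemmas already provide all the uniform estimates needed to pass to the limit.

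For existence, Lemma \ref{prop:maxv1} gives $0\le v_1^\varepsilon\le 1$ uniformly in $\varepsilon$. Fix any compact set $K\Subset \R^3\setminus\overline{D_1^0\cup D_2^0}$; for small $\varepsilon$, $K\subset\Omega^\varepsilon\setminus\Omega_{R_0}$ and by \eqref{nabla_v1_o} together with interior Schauder estimates applied to the harmonic equation we obtain uniform $C^{2,\alpha}(K)$ bounds. On any compact piece $\Gamma$ of $\partial D_1^0\cup\partial D_2^0$ bounded away from the origin, the local geometry is $\varepsilon$-independent for all sufficiently small $\varepsilon$ (after the trivial vertical translation of $D_1^0$), so standard boundary Schauder estimates give uniform $C^{1,\alpha}$ bounds up to $\Gamma$. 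A diagonal Arzel\`a--Ascoli argument then yields a subsequence $v_1^{\varepsilon_k}\to v_1^0$ in $C^2_{\rm loc}(\R^3\setminus\overline{D_1^0\cup D_2^0})\cap C^1_{\rm loc}(\R^3\setminus(D_1^0\cup D_2^0\cup\{0\}))$; in particular $v_1^0$ is harmonic in the exterior domain, attains the correct Dirichlet data on $(\partial D_1^0\cup\partial D_2^0)\setminus\{0\}$, and inherits the bound $0\le v_1^0\le 1$. The decay $v_1^0(x)=O(1/|x|)$ is obtained by choosing a single $r_0$ large enough so that $D_1^0\cup D_2^0\subset B_{r_0}$ and all $D_1^\varepsilon\cup D_2\subset B_{r_0}$, applying the Poisson representation \eqref{equ:u_i} to each $v_1^\varepsilon$ on $\R^3\setminus B_{r_0}$, and passing to the limit in $\varepsilon_k\to 0$.

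For uniqueness, suppose $\tilde v$ is another solution in the stated class and set $w:=v_1^0-\tilde v$. Then $w$ is bounded in $\R^3\setminus\overline{D_1^0\cup D_2^0}$, harmonic, vanishes on $(\partial D_1^0\cup\partial D_2^0)\setminus\{0\}$, and decays at infinity. Since $\{0\}$ has zero Newtonian capacity in $\R^3$ and $w$ is bounded near $0$, the isolated singularity is removable in the sense that $w$ does not pick up a point-mass contribution; combined with an inversion $x\mapsto x/|x|^2$ the problem reduces to a bounded harmonic function on a bounded domain with zero boundary data (after continuous extension across $\{0\}$), so the maximum principle gives $w\equiv 0$. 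The main obstacle, and the reason the regularity statement excludes the origin from the $C^1$ class, is precisely the handling of the touching point: one must argue that the boundary values are attained uniformly away from $0$ (so that the singular set is not artificially enlarged in the limit), and that no nontrivial bounded harmonic function is supported by the polar set $\{0\}$. This is the same mechanism used in Lemma 3.1 of \cite{bly}, adapted here to the unbounded domain by means of the Poisson representation of Lemma \ref{green}.
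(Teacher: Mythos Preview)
Your argument is essentially sound but follows a genuinely different route from the paper. The paper obtains existence directly by the Perron method (citing Theorem~2.12 and Lemma~2.13 of \cite{GT}), and then invokes standard boundary regularity (Agmon--Douglis--Nirenberg) away from the origin; uniqueness is left implicit in the reference to Lemma~3.1 of \cite{bly}. You instead construct $v_1^0$ as a subsequential limit of the family $v_1^\varepsilon$ using the uniform estimates already established. Your approach has the merit of making the convergence $v_1^\varepsilon\to v_1^0$ explicit (which is what Lemma~\ref{lem_29} ultimately needs), while the paper's Perron argument is more self-contained and does not rely on the $\varepsilon$-family at all.

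Two small inaccuracies are worth fixing. First, the claim ``for small $\varepsilon$, $K\subset\Omega^\varepsilon\setminus\Omega_{R_0}$'' is false in general: a compact $K\Subset\Omega^0$ may well lie in the neck region $\Omega_{R_0}$. This is harmless, since interior Schauder estimates require only $K\Subset\Omega^\varepsilon$ together with the uniform $L^\infty$ bound from Lemma~\ref{prop:maxv1}; the reference to \eqref{nabla_v1_o} is superfluous here. Second, the inversion $x\mapsto x/|x|^2$ does \emph{not} reduce the uniqueness problem to a bounded domain: since the touching point is the origin, the inverted domain is still unbounded. The clean way to handle uniqueness is to argue directly in $\Omega^0$ with the barrier $\eta/|x|$ (harmonic away from the origin, dominating the bounded $w$ on $\partial B_r\cap\Omega^0$ for $r$ small, and on the rest of the boundary since $w=0$ there), then apply the maximum principle on $\Omega^0\setminus B_r$ exactly as in Lemma~\ref{prop:maxv1}, and finally let $\eta\to0$. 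This is precisely the mechanism of Lemma~3.1 in \cite{bly} that you cite at the end.
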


\begin{proof}
It is easy to find a subharmonic function $0\leq\,v\leq1$ in $\R^3\backslash\overline{D_1^0\cup D_2^0}$, such that $v(x)=O(\frac{1}{|x|})$ at infinity and satisfying the boundary condition $v\big|_{\partial(D_1^0\cup D_2^0)}\leq\,v_{1}^{0}\big|_{\partial(D_1^0\cup D_2^0)}$. Then, the existence of solution $v_{1}^{0}$ can be easily obtained by the Perron method, see theorem 2.12 and lemma 2.13 in \cite{GT}. The regularity for $v_{1}^{0}$ follows from 
the standard elliptic theory, due to the fact that the boundary of the domain is $C^{2}$ except the origin, see e.g. Theorem 10.2 in \cite{adn0}.
\end{proof}

For $0\leq\,r\leq\,2R_{0}$, let
$$ \Omega_r^0:=\left\{(x',x_{3})\in \mathbb{R}^{3}\backslash\{0\}~\big|~h_{2}(x')<x_{3}<h_{1}(x'),~|x'|<r\right\}.$$
Similarly as $\bar{v}_{1}$, we construct $\bar{v}_{1}^{0}$, such that  $\bar{v}_{1}^{0}(x)=O(\frac{1}{|x|})$, as $|x|\rightarrow \infty$, $\bar{v}_{1}^{0}=1$ on $\partial{D}_{1}^{0}\setminus\{0\}$, $\bar{v}_{1}^{0}=0$ on $\partial{D}_{2}^{0} $,
\begin{equation}\label{def_barv10}
\bar{v}_{1}^{0}=\frac{x_{3}-h_{2}(x')}{(h_{1}-h_{2})(x')},\quad\mbox{in}~~\Omega^{0}_{R_{0}}, \end{equation}
and $\|\bar{v}_{1}^{0}\|_{C^{2,\alpha}(\Omega^{0}\setminus\Omega^{0}_{R_{0}})}\leq\,C$. It is easy to see that
\begin{equation}\label{nablau_starbar}
\left|\partial_{x'}\bar{v}_{1}^{0}(x)\right|\leq\frac{C}{|x'|},\quad
\partial_{x_{3}}\bar{v}_{1}^{0}(x)=\frac{1}{(h_{1}-h_{2})(x')},\quad\mbox{in}~~\Omega_{R_{0}}^{0}.
\end{equation}
It follows from the Proof of Lemma \ref{prop_v1} that 
\begin{equation*}
\left\|\nabla(v_{1}^{0}-\bar{v}_{1}^{0})\right\|_{L^{\infty}(\Omega^{0}_{R_{0}})}\leq\,C.
\end{equation*}
This implies that $\nabla\bar{v}_{1}^{0}$ is also the main term of $\nabla{v}_{1}^{0}$.

\begin{lemma}\label{lem_29}
Let $v_{1}$ and $v_{1}^{0}$ be defined by \eqref{equ:vj} and \eqref{equ:v10}, respectively. Then
\begin{equation}\label{v1-v1*}
\|v_{1}-v_{1}^{0}\|_{L^{\infty}\Big(\R^3\setminus{\big(D_{1}\cup{D}_{2}\cup{D}_{1}^{0}\cup\Omega_{\varepsilon^{1/(2m)}}\big)}\Big)}\leq\,C\varepsilon^{1/2},\qquad\,i=1,2.
\end{equation}
\end{lemma}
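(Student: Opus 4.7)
My plan is to apply the maximum principle to the harmonic difference $w:=v_1-v_1^{0}$ on $U:=\R^{3}\setminus\bigl(D_1\cup D_2\cup D_1^{0}\cup\Omega_{\varepsilon^{1/(2m)}}\bigr)$; since $w=O(|x|^{-1})$ at infinity, this will reduce the lemma to establishing $|w|\leq C\varepsilon^{1/2}$ pointwise on $\partial U$. I would split $\partial U$ into three pieces: (a) parts of $\partial D_2$, where $v_1=v_1^{0}=0$ and so $w=0$ trivially; (b) parts of $\partial D_1$ and $\partial D_1^{0}$ that bound $U$, on which one of the two functions equals $1$ and the other must be shown to be close to $1$; and (c) the lateral cylindrical ring $\Gamma:=\{|x'|=\varepsilon^{1/(2m)},\,h_2(x')<x_3<\varepsilon+h_1(x')\}$ that caps the excluded neck $\Omega_{\varepsilon^{1/(2m)}}$.

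For (b), I would take $P\in\partial D_1\cap\partial U$ of the form $P=(x',\varepsilon+h_1(x'))$ with $|x'|\geq\varepsilon^{1/(2m)}$, observe that $v_1^{0}=1$ at $P-(0',\varepsilon)\in\partial D_1^{0}$, and integrate $\partial_{x_3}v_1^{0}$ along the vertical segment between them. The crucial geometric input is that for $|x'|\geq\varepsilon^{1/(2m)}$ we have $(h_1-h_2)(x')=\Lambda|x'|^{m}\geq\Lambda\varepsilon^{1/2}$, so the $D^{0}$-analogue of \eqref{nabla_v1_in} gives $|\nabla v_1^{0}|\leq C\varepsilon^{-1/2}$ along this segment, hence $|v_1^{0}(P)-1|\leq C\varepsilon^{1/2}$. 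Boundary points away from the neck contribute only $O(\varepsilon)$ by a universal gradient bound. A symmetric argument, using Lemma~\ref{prop_v1} to control $|\nabla v_1|\leq C\varepsilon^{-1/2}$ along an analogous vertical segment, handles points on $\partial D_1^{0}\cap\partial U$.

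The hard part will be (c), since on $\Gamma$ the functions $v_1$ and $v_1^{0}$ genuinely differ at scale $\varepsilon^{1/2}$ and a direct pointwise gradient comparison is hopeless: $\nabla v_1$ and $\nabla v_1^{0}$ are individually of order $\varepsilon^{-1/2}$ in the neck. My plan is to compare each to its Keller approximant. From Lemma~\ref{prop_v1} and its analogue for $v_1^{0}$ one has $\|\nabla(v_1-\bar v_1)\|_{\infty}+\|\nabla(v_1^{0}-\bar v_1^{0})\|_{\infty}\leq C$; combined with the vanishing of these differences on the adjacent inclusion boundaries and the fact that $\Gamma$ sits at distance $O(\varepsilon^{1/2})$ from those boundaries, this yields $|v_1-\bar v_1|+|v_1^{0}-\bar v_1^{0}|\leq C\varepsilon^{1/2}$ on $\Gamma$. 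Using $(h_1-h_2)(x')=\Lambda\varepsilon^{1/2}$ and $d(x')=\varepsilon+\Lambda\varepsilon^{1/2}$ on $\Gamma$, a direct calculation from \eqref{ubar} and \eqref{def_barv10} gives
$$|\bar v_1-\bar v_1^{0}|=\bigl(x_3-h_2(x')\bigr)\,\frac{\varepsilon}{d(x')(h_1-h_2)(x')}\leq\frac{\varepsilon}{(h_1-h_2)(x')}\leq C\varepsilon^{1/2}.$$
Combining (a)--(c) I obtain $|w|\leq C\varepsilon^{1/2}$ on $\partial U$, and the maximum principle on $U\cap B_R$ with $R\to\infty$ (using the decay of $w$ supplied by Lemma~\ref{green}) will then finish the proof of Lemma~\ref{lem_29}.
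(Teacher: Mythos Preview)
Your proposal is correct and follows essentially the same route as the paper: reduce to boundary estimates via the maximum principle on $U\cap B_R$ with $R\to\infty$, split $\partial U$ into $\partial D_2$, $\partial(D_1\cup D_1^{0})$, and the lateral ring $\Gamma$, and on $\Gamma$ pass through the Keller approximants $\bar v_1,\bar v_1^{0}$ exactly as you do. The paper's proof is the same in structure and in the key estimates.

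One small geometric correction to your description of case~(b): the points $P=(x',\varepsilon+h_1(x'))$ on the lower arc of $\partial D_1$ with $|x'|<2R_0$ are \emph{not} on $\partial U$, because $D_1=D_1^{0}+(0',\varepsilon)$ forces such $P$ to lie \emph{inside} $D_1^{0}$; moreover your proposed vertical integration of $\partial_{x_3}v_1^{0}$ from $(x',h_1(x'))$ to $P$ would enter $D_1^{0}$, where $v_1^{0}$ is undefined. This does not damage your argument: in the neck region the only piece of $\partial(D_1\cup D_1^{0})$ actually on $\partial U$ is $\partial D_1^{0}\setminus D_1$, which you handle correctly in your ``symmetric'' case by integrating $\partial_{x_3}v_1$ (well defined on the segment, since it sits in $\Omega$) and invoking \eqref{nabla_v1_in}. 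The remaining piece $\partial D_1\setminus D_1^{0}$ lies entirely away from the neck and is covered by your ``universal gradient bound'' remark; this is also how the paper treats it.
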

\begin{proof}
Since it is obvious that $v_{1}=v_{1}^{0}=0$ on $\partial{D}_{2}=\partial{D}_{2}^{0}$, and $v_1-v_1^0=O(\frac{1}{|x|})$ as $|x|\to\infty$, we only need to estimate the values of $v_1-v_1^0$ on $\partial(D_{1}\cup{D}_{1}^{0})$ and at infinity. We divide $\partial(D_{1}\cup{D}_{1}^{0})$ into two parts: (a) $\partial{D}_{1}^{0}\setminus{D}_{1}$ and (b) $\partial{D}_{1}\setminus{D}_{1}^{0}$. 

For $x\in\partial{D}_{1}^{0}\cap\{\varepsilon^{1/(2m)}<|x'|<R,|x_{3}|<R^{2m}\}$, recalling $v_1^0=1$ on $\partial{D}_{1}^{0}$, and $v_1=1$ on $\partial{D}_{1}$, by virtue of mean value theorem and \eqref{nabla_v1_in},
\begin{equation*}
|v_{1}(x)-v_{1}^{0}(x)|=|v_{1}(x)-1|=|v_{1}(x',h_{1}(x'))-v_{1}(x',\varepsilon+h_{1}(x'))|\leq\frac{C\varepsilon}{\varepsilon+|x'|^{m}}\leq\,C\varepsilon^{1/2}.
\end{equation*}
For $x\in\partial{D}_{1}^{0}\cap(\Omega\setminus\Omega_{R_{0}})$, by using again the mean value theorem and the estimates $\|\nabla v_1\|_{L^{\infty}(\Omega\setminus\Omega_{R_{0}})}\leq\,C$, we have
\begin{equation}\label{v11}
|v_{1}(x)-v_{1}^{0}(x)|\leq\,C\varepsilon.
\end{equation} Similarly, for $x\in\partial{D}_{1}\setminus{D}_{1}^{0}$, the above estimate \eqref{v11} also holds. Hence, 
$$|v_{1}(x)-v_{1}^{0}(x)|\leq\,C\varepsilon^{1/2},\quad\mbox{on}~\partial(D_{1}\cup{D}_{1}^{0}).
$$
On the other hand, for $x\in\{|x'|=\varepsilon^{1/(2m)}\}\cap\Omega_{R_{0}}^{0}$, using the mean value theorem again, there holds
$$|v_{1}(x)-v_{1}^{0}(x)|\leq|v_{1}(x)-\bar{v}_{1}(x)|+|\bar{v}_{1}(x)-\bar{v}_{1}^{0}(x)|+|\bar{v}_{1}^{0}-v_{1}^{0}(x)|\leq\,C\varepsilon^{1/2}.$$

Finally, note that $v_1-v_1^0=O(\frac{1}{|x|})$ as $|x|\to\infty$.  
Hence, by the same argument as Lemma \ref{prop:maxv1}, we obtain that 
$$\max_{\R^3\setminus{(D_{1}\cup{D}_{2}\cup{D}_{1}^{0}\cup\Omega_{\varepsilon^{1/(2m)}})}}(v_1-v_1^0)=\max_{(\{|x'|=\varepsilon^{1/(2m)}\}\cap\Omega_{R_{0}}^{0})\cup\partial(D_{1}\cup{D}_{1}^{0})}(v_1-v_1^0).$$ 
So, \eqref{v1-v1*} holds.
\end{proof}

If $\partial{D}_{1}$ and $\partial{D}_{2}$ are assumed to be $C^{2,\alpha}$ and satisfy \eqref{equ:D1D2}, then we have an improvement of Lemma \ref{lem_29} by interpolation. The proof is omitted here, because it is the same as the proof of lemma 3.2 in \cite{L} with the help of Lemma \ref{prop:maxv1} and Lemma \ref{green}.

\begin{lemma}\label{lem_49}
Assume that $v_{1}$ and $v_{1}^{0}$ are solutions of \eqref{equ:vj} and \eqref{equ:v10}, respectively. If $\partial{D}_{1}^{0}$ and $\partial{D}_{2}^{0}$ are $C^{2,\alpha}$ and satisfy \eqref{equ:D1D2}, then 
\begin{equation}\label{nabla_v1v10}
\begin{split}
&|\nabla{v}_{1}(x)|\leq\,C|x'|^{-m},~\,x\in\Omega_{R_0}\setminus\Omega_{\varepsilon^{1/(2m)}},\\
&|\nabla{v}_{1}^{0}(x)|\leq\,C|x'|^{-m},~\,x\in\Omega^{0}_{R_0}\setminus\Omega^{0}_{\varepsilon^{1/(2m)}};
\end{split}
\end{equation}
and
\begin{equation}\label{nabla_v1-v10}
|\nabla(v_{1}-v_{1}^{0})(x)|\leq\,C\varepsilon^{1/4}|x'|^{-m},\quad\mbox{in}~~~\Omega^{0}_{R_0}\setminus\Omega^{0}_{\varepsilon^{1/(2m)}}.
\end{equation}
\end{lemma}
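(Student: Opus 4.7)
\textbf{Proof proposal for Lemma \ref{lem_49}.} The plan is to mirror the proof of Lemma 3.2 in \cite{L} (which handles the analogous gradient estimate in a bounded domain), using Lemma \ref{prop:maxv1} and Lemma \ref{green} to control the behavior at infinity and thereby transfer the bounded-domain machinery to our exterior setting.

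First I would establish parts (i) and (ii) by a triangle-inequality argument against the Keller test function. In the region $|x'|\geq\varepsilon^{1/(2m)}$ we have $|x'|^m\geq\varepsilon^{1/2}\gg\varepsilon$, so $d(x')=\varepsilon+(h_1-h_2)(x')\sim|x'|^m$. Inserting this into \eqref{nablau_bar} yields $|\nabla\bar{v}_1(x)|\leq C/d(x')\leq C/|x'|^m$; combined with the Lipschitz bound $\|\nabla(v_1-\bar{v}_1)\|_{L^\infty(\Omega)}\leq C$ from Lemma \ref{prop_v1} and the fact that $1/|x'|^m\geq 1/R_0^m\geq 1$, the triangle inequality absorbs the $O(1)$ term and gives $|\nabla v_1(x)|\leq C/|x'|^m$. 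The same reasoning with $\bar{v}_1^0$ replacing $\bar{v}_1$ (and $\varepsilon=0$) yields the analogous bound for $v_1^0$.

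For part (iii) I would rescale and interpolate. Fix $z\in\Omega^0_{R_0}\setminus\Omega^0_{\varepsilon^{1/(2m)}}$, set $r:=|z'|$, and introduce the anisotropic change of variables
\[
y'=\frac{x'-z'}{r},\qquad y_3=\frac{x_3-h_2(z'+ry')}{(h_1-h_2)(z'+ry')},
\]
which maps a neighborhood of $z$ in $\Omega^0$ onto the unit cylinder $\{|y'|<1/2,\,0<y_3<1\}$. Under this transformation, $\tilde{v}_1^0(y):=v_1^0(x)$ satisfies a uniformly elliptic equation with $C^\alpha$ coefficients on a domain of unit size with boundary values $0$ and $1$, so by boundary Schauder estimates $\|\tilde v_1^0\|_{C^{2,\alpha}}\leq C$ with $C$ independent of $r$ and $\varepsilon$. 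A parallel rescaling adapted to the $\varepsilon$-shifted top boundary $\{x_3=\varepsilon+h_1(x')\}$ gives the same bound for the rescaled $\tilde v_1$. Lemma \ref{lem_29} translates into $\|\tilde v_1-\tilde v_1^0\|_{L^\infty}\leq C\varepsilon^{1/2}$ on the relevant subcylinder, and the Gagliardo--Nirenberg interpolation inequality then yields
\[
\|\nabla_y(\tilde v_1-\tilde v_1^0)\|_{L^\infty}\leq C\,\|\tilde v_1-\tilde v_1^0\|_{L^\infty}^{1/2}\,\|\tilde v_1-\tilde v_1^0\|_{C^{2,\alpha}}^{1/2}\leq C\varepsilon^{1/4}.
\]
Scaling back, the normal derivative carries a factor $1/(h_1-h_2)(z')\sim 1/|z'|^m$ that dominates the tangential factor $1/r$ (since $m\geq 2$), which delivers $|\nabla(v_1-v_1^0)(z)|\leq C\varepsilon^{1/4}/|z'|^m$.

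The main obstacle is that $v_1$ and $v_1^0$ are defined on slightly different exterior domains: the upper boundaries $\partial D_1^\varepsilon$ and $\partial D_1^0$ differ by $\varepsilon$ in the $x_3$-direction, which becomes a mismatch of order $\varepsilon/|z'|^m\leq\varepsilon^{1/2}$ in rescaled variables. I would handle this by using two slightly different flattening maps (one per function) and either extending one function by its boundary data into the other's domain, or carefully tracking the $O(\varepsilon^{1/2})$ boundary discrepancy so that it is absorbed by the same Gagliardo--Nirenberg step and still produces $\varepsilon^{1/4}$. A secondary technical point is that the $L^\infty$ estimate \eqref{v1-v1*} only holds away from $\Omega_{\varepsilon^{1/(2m)}}$; this is why we require $r\geq\varepsilon^{1/(2m)}$, which guarantees the rescaling subcylinder of tangential size $r$ stays inside the region where Lemma \ref{lem_29} applies. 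The decay of $v_1$, $v_1^0$ at infinity (needed to justify the global $L^\infty$ bound and to apply elliptic regularity without boundary conditions at infinity) is supplied by Lemma \ref{prop:maxv1} and the Poisson representation in Lemma \ref{green}.
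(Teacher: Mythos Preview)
Your overall strategy---rescale to a unit-size domain, obtain uniform $C^{2}$ bounds by Schauder, then interpolate against the $L^\infty$ estimate of Lemma~\ref{lem_29}---is exactly the paper's (omitted) argument, which follows Lemma~3.2 of \cite{L}. Your derivation of \eqref{nabla_v1v10} via the triangle inequality against $\bar v_1$ and Lemma~\ref{prop_v1} is a valid shortcut; the paper instead extracts these bounds as a byproduct of the same rescaling used for \eqref{nabla_v1-v10}.

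There is, however, a real defect in your rescaling for \eqref{nabla_v1-v10}. With tangential scale $r=|z'|$ and normal scale $(h_1-h_2)(z')\sim|z'|^{m}$, the pushed-forward Laplacian has principal part proportional to $r^{-2}\Delta_{y'}+|z'|^{-2m}\partial_{y_3}^{2}$, so the ellipticity ratio is $\sim|z'|^{2m-2}\to0$; the operator is \emph{not} uniformly elliptic, the Schauder constant blows up as $|z'|\to0$, and the interpolation step loses its uniformity. The remedy---and this is precisely what \cite{L} does---is to use the \emph{isotropic} rescaling $x'-z'=|z'|^{m}y'$, $x_{3}=|z'|^{m}y_{3}$ on the thin annulus $\Omega_{|z'|+|z'|^{m}}\setminus\Omega_{|z'|}$, whose tangential width $|z'|^{m}$ is chosen to match the gap height. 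Then $V_{1},V_{1}^{0}$ are genuinely harmonic on approximate unit boxes $Q_{1}\supset Q_{1}^{0}$, standard estimates give $|\nabla^{2}V_{1}|+|\nabla^{2}V_{1}^{0}|\leq C$, interpolation with $\|V_{1}-V_{1}^{0}\|_{L^{\infty}(Q_{1}^{0})}\leq C\varepsilon^{1/2}$ yields $|\nabla(V_{1}-V_{1}^{0})|\leq C\varepsilon^{1/4}$, and scaling back by $|z'|^{-m}$ gives \eqref{nabla_v1-v10}. This also dissolves your ``main obstacle'': since $\Omega^{0}_{r}\subset\Omega_{r}$, the rescaled domains satisfy $Q_{1}^{0}\subset Q_{1}$ and one simply interpolates on the common region $Q_{1}^{0}$, with no need for separate flattening maps or extensions.
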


\section{Proof of Proposition \ref{the:1}}\label{sec3}

This section is devoted to proving Proposition \ref{the:1}. We prove part (i) and part (ii) in Subsection \ref{subsec4.1} and Subsection \ref{subsec4.2}, respectively. Here the asymptotic behavior at infinity is mainly dealt with.

\subsection{Proof of (i) in Proposition \ref{the:1}}\label{subsec4.1}

Let $$\bar{d}:=\max_{x\in\overline{D_1^0 \cup D_2^0}}|x|.$$
Then
\begin{lemma}\label{lem:difi}
For sufficiently small $\epsilon<<\bar{d}$, we have $D_1\cup D_2\subset B_{2\bar{d}}(0)$, moreover,
\begin{equation}
\int_{\R^3\backslash B_{2\bar{d}}(0)}|\nabla v_1|^2 =\int_{\R^3\backslash B_{2\bar{d}}(0)}|\nabla v_1^0|^2+O(\sqrt\epsilon).
\end{equation}
\end{lemma}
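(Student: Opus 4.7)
The containment $D_1 \cup D_2 \subset B_{2\bar d}(0)$ is immediate: since $D_2 = D_2^0 \subset B_{\bar d}(0)$ and $D_1 = D_1^0 + (0',\varepsilon)$, every point of $D_1$ lies within distance $\bar d + \varepsilon < 2\bar d$ of the origin once $\varepsilon < \bar d$. So the only content is the quantitative comparison of the two Dirichlet energies on the exterior of $B_{2\bar d}(0)$, and I will obtain it by a single application of Green's identity combined with the previously established sup-norm bound $\|v_1 - v_1^0\|_\infty = O(\sqrt \varepsilon)$ on the relevant set.

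\textbf{Main identity.} I will factor the difference of squares and use that both $v_1$ and $v_1^0$ are harmonic in $\R^3 \setminus B_{2\bar d}(0)$ once $\varepsilon$ is small. Writing
\begin{equation*}
\int_{\R^3 \setminus B_{2\bar d}} |\nabla v_1|^2 - |\nabla v_1^0|^2 = \int_{\R^3 \setminus B_{2\bar d}} \nabla(v_1 - v_1^0) \cdot \nabla(v_1 + v_1^0),
\end{equation*}
I cut off at $\partial B_R$, apply Green's identity on the annulus $B_R \setminus B_{2\bar d}$, and let $R \to \infty$. The decay bounds $v_1, v_1^0 = O(|x|^{-1})$ and $|\nabla v_1|, |\nabla v_1^0| = O(|x|^{-2})$ from Lemma \ref{green} guarantee that the integral over $\partial B_R$ is $O(R^{-1}) \to 0$, leaving only the inner-boundary contribution
\begin{equation*}
\int_{\R^3 \setminus B_{2\bar d}} |\nabla v_1|^2 - |\nabla v_1^0|^2 = -\int_{\partial B_{2\bar d}} (v_1 - v_1^0) \, \frac{\partial (v_1 + v_1^0)}{\partial \nu_{\rm in}} \, d\sigma,
\end{equation*}
where $\nu_{\rm in}$ is the normal to $\partial B_{2\bar d}$ pointing toward the origin.

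\textbf{Estimating the boundary term.} On $\partial B_{2\bar d}(0)$ the distance to $D_1 \cup D_2 \cup D_1^0 \cup \Omega_{\varepsilon^{1/(2m)}}$ is bounded below by a universal constant (roughly $\bar d$) once $\varepsilon$ is small, so Lemma \ref{lem_29} yields $|v_1 - v_1^0| \leq C \sqrt \varepsilon$ pointwise on $\partial B_{2\bar d}$. Moreover, since both $v_1$ and $v_1^0$ are harmonic and uniformly bounded by $1$ in a fixed neighborhood of $\partial B_{2\bar d}$ that is disjoint from the inclusions, interior gradient estimates give $|\nabla v_1| + |\nabla v_1^0| \leq C$ on $\partial B_{2\bar d}$ with $C$ independent of $\varepsilon$. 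Combining these bounds with $|\partial B_{2\bar d}| = 16\pi \bar d^{\,2}$ produces the desired estimate $O(\sqrt \varepsilon)$ and finishes the proof.

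\textbf{Expected difficulty.} The argument is essentially bookkeeping once the previous lemmas are in hand; the only mild subtlety is the justification of integration by parts on the unbounded exterior, which is handled by the $R \to \infty$ truncation argument together with the decay rates in Lemma \ref{green}. I do not anticipate any genuine obstacle.
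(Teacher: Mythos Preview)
Your proposal is correct and follows essentially the same route as the paper: factor the difference of squares, integrate by parts (the paper invokes the divergence theorem together with the $O(|x|^{-2})$ decay of $\partial_r v_1,\partial_r v_1^0$ in lieu of your explicit $R\to\infty$ truncation), and then bound the resulting boundary integral on $\partial B_{2\bar d}$ using Lemma~\ref{lem_29} for $|v_1-v_1^0|$ and the uniform gradient bounds for $v_1,v_1^0$ away from the neck. Your write-up is in fact a bit more explicit about the justification of the integration by parts on the unbounded region.
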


\begin{proof}
First,
$$\int_{\R^3\backslash B_{2\bar{d}}(0)}|\nabla v_1|^2 -\int_{\R^3\backslash B_{2\bar{d}}(0)}|\nabla v_1^0|^2=\int_{\R^3\backslash B_{2\bar{d}}(0)}\nabla(v_1- v_1^0)\cdot\nabla(v_1+ v_1^0).$$
By virtue of proposition 2.74 and 2.75 in \cite{GB}, we have 
$$\frac{\partial v_1}{\partial r}\sim O(\frac{1}{|x|^2}), \quad\frac{\partial v_1^0}{\partial r}\sim O(\frac{1}{|x|^2}),\quad\mbox{as}~ |x|\to\infty.$$ 
Applying the divergence theorem, we have
$$\left|\int_{\R^3\backslash B_{2\bar{d}}(0)}\nabla(v_1- v_1^0)\cdot\nabla(v_1+ v_1^0)\right|=\left|\int_{\partial B_{2\bar{d}}}(v_1-v_1^0)\cdot\frac{\partial (v_1+v_1^0)}{\partial \nu}\mathop{}\!\mathrm{d}S_x\right|\leq\,C\varepsilon^{1/2},$$
 by virtue of \eqref{v1-v1*} and the estimates $\|\nabla v_1\|_{L^{\infty}(\Omega\setminus\Omega_{R_{0}})}+\|\nabla v_1^0\|_{L^{\infty}(\Omega^{0}\setminus\Omega_{R_{0}}^{0})}\leq\,C$.
\end{proof}

\begin{proof}[Proof of (i) in Proposition \ref{the:1} ]
To prove (i) in Proposition \ref{the:1}, we first divide the integral into two parts:
\begin{equation}\label{energy_v1}
C_{ii}=\int_{\R^3\backslash \overline{D}}|\nabla{v}_{1}|^{2}
=\int_{\Omega\backslash\Omega_{\varepsilon^{\gamma}}}|\nabla{v}_{1}|^{2}+\int_{\Omega_{\varepsilon^{\gamma}}}|\nabla{v}_{1}|^{2}
=:\mathrm{I}+\mathrm{II},
\end{equation}
where $\Omega=\R^3\backslash \overline{D}$, $D=D_{1}\cup{D}_{2}$, and we take $\gamma=\frac{1}{4m}$, for convenience.

{\bf STEP 1.} First, we use the explicit functions $v_{1}^{0}$ and $\bar{v}_{1}$ to approximate ${v}_{1}$ in the unbounded region $\Omega\backslash\Omega_{\varepsilon^{\gamma}}$ and the small region $\Omega_{\varepsilon^{\gamma}}$, respectively. We claim that
\begin{equation}\label{equ346}
\begin{aligned}
\mathrm{I}=\int_{ \Omega^0\backslash\Omega_{\varepsilon^{\gamma}}^0}|\nabla{v}_{1}^0|^{2}+O(E_{m}(\epsilon)),\quad\mbox{and}~~\mathrm{II}=\int_{\Omega_{\varepsilon^{\gamma}}}|\nabla\bar{v}_{1}|^{2}+O(E_{m}(\epsilon)).
\end{aligned}
\end{equation}
where $E_{m}(\epsilon)$ is defined by \eqref{Emn}.

Indeed, we further divide $\Omega\backslash\Omega_{\varepsilon^{\gamma}}$ into two parts:
\begin{equation}
\begin{aligned}
&\mathrm{I}-\int_{ \Omega^0\backslash\Omega_{\varepsilon^{\gamma}}^0}|\nabla{v}_{1}^0|^{2}\\
=&\Big(\int_{\Omega\setminus\Omega_{R_{0}}}|\nabla{v}_{1}|^{2}-\int_{ \Omega^0\backslash\Omega_{R_0}^0}|\nabla{v}_{1}^0|^{2}\Big)+\Big(\int_{\Omega_{R_{0}}\setminus\Omega_{\varepsilon^{\gamma}}}|\nabla{v}_{1}|^{2}-\int_{\Omega_{R_{0}}^{0}\setminus\Omega_{\varepsilon^{\gamma}}^0}|\nabla{v}_{1}^0|^{2}\Big)\\
=&:\mathrm{I}_1+\mathrm{I}_2.
\end{aligned}
\end{equation}
For $\mathrm{I}_{1}$, we rearrange it as follows:
\begin{align*}
\mathrm{I}_1&=\int_{\Omega\setminus{({D}_{1}^{0}\cup\Omega_{R_{0}})}}(|\nabla{v}_{1}|^{2}-|\nabla{v}_{1}^{0}|^{2})+\int_{D_{1}^{0}\setminus(D_{1}\cup\Omega_{R_{0}})}|\nabla{v}_{1}|^{2}+\int_{D_{1}\setminus{D_{1}^{0}}}|\nabla{v}_{1}^{0}|^{2}\\
&=:\mathrm{I}_{11}+\mathrm{I}_{12}+\mathrm{I}_{13}.
\end{align*}
By virtue of Lemma \ref{lem:difi}, we have
\begin{align*}
|\mathrm{I}_{11}|&=\left|\int_{B_{2d}\setminus{(D\cup{D}_{1}^{0}\cup\Omega_{R_{0}})}}(|\nabla{v}_{1}|^{2}-|\nabla{v}_{1}^{0}|^{2})\right|+O(\sqrt{\epsilon})\\
&\leq\left|\int_{B_{2d}\setminus{(D\cup{D}_{1}^{0}\cup\Omega_{R_{0}})}}\nabla(v_{1}-v_{1}^{0})\cdot\nabla(v_{1}+v_{1}^{0})\right|+O(\sqrt{\epsilon}).
\end{align*}
Since
$$\Delta(v_{1}-v_{1}^{0})=0,\quad\mbox{in}~~~\Omega\setminus{\big({D}_{1}^{0}\cup\Omega_{R_{0}/2}\big)},$$
and
$$0<v_{1},v_{1}^{0}<1,\quad\mbox{in}~~~\Omega\setminus{\big({D}_{1}^{0}\cup\Omega_{R_{0}/2}\big)},$$
it follows by standard elliptic regularity theory that  
$$|\nabla^{2}(v_{1}-v_{1}^{0})|\leq|\nabla^{2}v_{1}|+|\nabla^{2}v_{1}^{0}|\leq\,C,\quad\mbox{in}~~~\Omega\setminus{\big({D}_{1}^{0}\cup\Omega_{R_{0}/2}\big)},$$
where $C$ is independent of $\varepsilon$, provided $\partial{D}_{1}^{0}$, $\partial{D}_{2}^{0}$ 
are $C^{2,\alpha}$.

By interpolation inequalities, see e.g. Lemma 6.32 and Lemma 6.35 in \cite{GT}, for $x\in \Omega\setminus{({D}_{1}^{0}\cup\Omega_{R_{0}/2})}$, $0<\mu<1$, we have 
\begin{equation*}
 |\nabla(v_{1}-v_{1}^{0})(x)|\leq \frac{C}{\mu}\left\|v_{1}-v_{1}^{0}\right\|_{L^{\infty}(\Omega\setminus{({D}_{1}^{0}\cup\Omega_{R_{0}/2}))}}+\mu\left\|\nabla^{2}(v_{1}-v_{1}^{0})\right\|_{L^{\infty}(\Omega\setminus{({D}_{1}^{0}\cup\Omega_{R_{0}/2}))}}, 
\end{equation*}
Taking $\mu=\epsilon^{\frac{1}{4}}$, by using the interpolation above with \eqref{v1-v1*}, we have
\begin{equation*}\label{v1-v1*outside}
|\nabla(v_{1}-v_{1}^{0})|\leq\,C
\varepsilon^{1/4},\quad\mbox{in}~~~\Omega\setminus{\big({D}_{1}^{0}\cup\Omega_{R_{0}/2}\big)}.
\end{equation*}
So, $\mathrm{I}_{11}=O(\epsilon^{\frac{1}{4}})$. In view of the boundedness of $|\nabla{v}_{1}|$ in $D_{1}^{0}\setminus(D_{1}\cup\Omega_{R_{0}})$ and  $D_{1}\setminus{D_{1}^{0}}$, and their volumes $|D_{1}^{0}\setminus(D_{1}\cup\Omega_{R_{0}})|$ and $|D_{1}\setminus{D_{1}^{0}}|$ are less than $C\varepsilon$, it is easy to get $\mathrm{I}_{12}+\mathrm{I}_{13}\leq\,C\varepsilon$. Hence, $|\mathrm{I}_{1}|\leq\,C\varepsilon^{1/4}$.
 
For $\mathrm{I}_2$, 
\begin{equation}\label{equ:I2}
\mathrm{I}_2=\int_{(\Omega_{R_{0}}\setminus\Omega_{\varepsilon^{\gamma}})\setminus(\Omega^{0}_{R_0}\setminus\Omega^{0}_{\varepsilon^{\gamma}})}|\nabla{v}_{1}|^{2}+\int_{\Omega^{0}_{R_0}\setminus\Omega^{0}_{\varepsilon^{\gamma}}}|\nabla(v_{1}-v^{0}_{1})|^{2}\nonumber+2\int_{\Omega^{0}_{R_0}\setminus\Omega^{0}_{\varepsilon^{\gamma}}}\nabla{v}_{1}^{0}\cdot\nabla(v_{1}-v^{0}_{1}), 
\end{equation}
by using Lemma \ref{lem_49}, we obtain
$$\int_{(\Omega_{R_{0}}\setminus\Omega_{\varepsilon^{\gamma}})\setminus(\Omega^{0}_{R_0}\setminus\Omega^{0}_{\varepsilon^{\gamma}})}|\nabla{v}_{1}|^{2}\leq\,\int_{\varepsilon^{\gamma}<|x'|<R_{0}}\frac{C\varepsilon\, dx'}{|x'|^{2m}}\leq\,C\varepsilon^{\frac{1}{2}+\frac{1}{2m}}\leq\,CE_{m}(\varepsilon);$$
\begin{align}\nonumber\int_{\Omega^{0}_{R_0}\setminus\Omega^{0}_{\varepsilon^{\gamma}}}|\nabla(v_{1}-v^{0}_{1})|^{2}\leq& C\epsilon^{1/2}\int_{{\Omega_{R_0}^0}\setminus\Omega^{0}_{\varepsilon^{\gamma}}}|x'|^{-2m}dx'dx_n \\
\leq& C\epsilon^{1/2}\int_{\epsilon^\gamma<|x'|<R_0}\frac{dx'}{|x'|^m}\leq C\epsilon^{1/4}E_m(\epsilon);\nonumber
\end{align}
and
$$\left|2\int_{\Omega^{0}_{R_0}\setminus\Omega^{0}_{\varepsilon^{\gamma}}}\nabla{v}_{1}^{0}\cdot\nabla(v_{1}-v^{0}_{1})\right|\leq C\epsilon^{1/4}\int_{\epsilon^\gamma<|x'|<R_0}\frac{dx'}{|x'|^m}\leq CE_m(\epsilon)$$
so
$$|\mathrm{I}_2|\leq\,CE_{m}(\varepsilon).$$
Thus, the claim for $\mathrm{I}$ is proved.

For $\mathrm{II}$, by  Lemma \ref{prop_v1}, we have
\begin{equation}\label{v1barv1}
\mathrm{II}-\int_{\Omega_{\varepsilon^{\gamma}}}|\nabla\bar{v}_{1}|^{2}=2\int_{\Omega_{\varepsilon^{\gamma}}}\nabla\bar{v}_{1}\cdot\nabla(v_{1}-\bar{v}_{1})
+\int_{\Omega_{\varepsilon^{\gamma}}}|\nabla(v_{1}-\bar{v}_{1})|^{2}
=O(\epsilon^{\frac{1}{2m}}).
\end{equation}
So the proof of claim \eqref{equ346} is finished.

{\bf STEP 2.}
Next we prove that 
$$\int_{ \Omega^0\backslash\Omega_{\varepsilon^{\gamma}}^0}|\nabla{v}_{1}^0|^{2}=\int_{\Omega^{0}_{R_{0}}\setminus\Omega^{0}_{\varepsilon^{\gamma}}}|\partial_{x_{3}}\bar{v}_{1}^{0}|^{2}+M_{1}^{(1)}+M_{1}^{(2)}+O(\varepsilon^{\gamma}),$$
where 
\begin{align*}
M_{1}^{(1)}&:=\int_{\Omega^0\setminus\Omega^{0}_{R_0}}|\nabla{v}_{1}^{0}|^{2}, \\
M_{1}^{(2)}&:=2\int_{\Omega^{0}_{R_{0}}}\nabla\bar{v}_{1}^{0}\cdot\nabla(v^{0}_{1}-\bar{v}_{1}^{0})
+\int_{\Omega^{0}_{R_{0}}}(|\nabla(v^{0}_{1}-\bar{v}_{1}^{0})|^{2}+|\partial_{x'}\bar{v}_1^0|^2).
\end{align*}
Indeed, similarly as Lemma \ref{gradient_infty}, we have $D_iv_1^0(x)\sim O(\frac{1}{|x|^2})$, and $v_1^0(x)\sim O(\frac{1}{|x|})$. Combining with $|\nabla v_1^0|<C$ in $\Omega^0\setminus\Omega^{0}_{R_0}$, $|v_1^0|\leq1$ and the divergence theorem, we obtain $M_{1}^{(1)}<C$. For $M_{1}^{(2)}$, using $\|\nabla(v^{0}_{1}-\bar{v}_{1}^{0})\|_{L^{\infty}(\Omega_{R})}\leq\,C$, $$|\nabla\bar{v}_{1}^{0}(x)|\leq\frac{C}{(h_{1}-h_{2})(x')},\quad\,\mbox{and}~~|\partial_{x'}\bar{v}_{1}^{0}(x)|^{2}\leq\frac{C}{(h_{1}-h_{2})(x')},$$ we have $M_{1}^{(2)}<C$, and
$$2\left|\int_{\Omega^{0}_{\varepsilon^{\gamma}}}\nabla\bar{v}_{1}^{0}\cdot\nabla(v^{0}_{1}-\bar{v}_{1}^{0})\right|
+\int_{\Omega^{0}_{\varepsilon^{\gamma}}}(|\nabla(v^{0}_{1}-\bar{v}_{1}^{0})|^{2}+|\partial_{x'}\bar{v}_1^0|^2)\leq\,C\varepsilon^{\gamma}.$$
On the other hand, recalling \eqref{nablau_bar}, we have
$$\int_{\Omega_{\varepsilon^{\gamma}}}|\nabla\bar{v}_{1}|^{2}=\int_{\Omega_{\varepsilon^{\gamma}}}|\partial_{x_3}\bar{v}_{1}|^{2}+O(E_{m}(\epsilon)),$$
here we used
$$\int_{\Omega_{\varepsilon^{\gamma}}}|\partial_{x'}\bar{v}_{1}|^{2} \le C \int_{|x'|<\varepsilon^\gamma} \frac{|x'|^{2m-2}}{\varepsilon + |x'|^{m}} \,dx'  \le C \int_{|x'|<\varepsilon^\gamma} \,|x'|^{m-2}dx' = O\left(\varepsilon^{\frac{n+m-3}{4m}}\right).$$

While, it follows from \eqref{nablau_starbar} and \eqref{nablau_bar} that 
\begin{align}\label{equ3.17}
&\int_{\Omega_{\varepsilon^{\gamma}}}|\partial_{x_{3}}\bar{v}_{1}|^{2}+\int_{\Omega^{0}_{R_{0}}\setminus\Omega^{0}_{\varepsilon^{\gamma}}}|\partial_{x_{3}}\bar{v}_{1}^{0}|^{2}\nonumber\\=&\int_{R_0>|x'|>\varepsilon^{\gamma}}\frac{dx'}{(h_{1}-h_{2})(x')}+\int_{|x'|<\varepsilon^{\gamma}}\frac{dx'}{\varepsilon+(h_{1}-h_{2})(x')}\nonumber\\
=&\int_{\varepsilon^\gamma < |x'| < R_0} \frac{dx'}{\Lambda|x'|^{m}} +  \int_{|x'| < \varepsilon^\gamma} \frac{dx'}{\varepsilon + \Lambda|x'|^{m}}\nonumber\\
= &\int_{|x'| <R_0} \frac{dx'}{\varepsilon + \Lambda|x'|^{m}}+ O\left(\varepsilon^{\frac{1}{2}+\frac{1}{2m}}  \right).
\end{align}
Thus, to prove \eqref{equ:m>2}, it suffices to calculate the integral in the right hand side of \eqref{equ3.17}.

{\bf STEP 3.} We now calculate the integral in \eqref{equ3.17}. 

$(i)$ For $m=2$, 
\begin{align}\label{n=3part2}
\begin{split}
\int_{|x'| <R_0} \frac{dx'}{\varepsilon + \Lambda|x'|^{2}} =&\frac{2\pi}{\Lambda}\int_{0}^{R_{0}(\frac{\Lambda}{\varepsilon})^{1/2}}\frac{r\,dr}{1+r^{2}}=\frac{\mathcal{L}_{2}|\log\varepsilon|}{\Lambda} +M_{1}^{(3)}+ O(\varepsilon),
\end{split}
\end{align} 
where $M_{1}^{(3)}:=\frac{\pi}{\Lambda}(2\log R_{0}+\log\Lambda)$. Therefore, it follows from \eqref{energy_v1}, \eqref{equ346}, and \eqref{n=3part2} that \eqref{equ:m>2} holds,
where $M_{1}:=M_{1}^{(1)}+M_{1}^{(2)}+M_{1}^{(3)}$.

$(ii)$ For $m>2$,
\begin{align*}\label{n=3part2}
\int_{|x'| <R_0} \frac{dx'}{\varepsilon + \Lambda|x'|^{m}} =&\frac{2\pi}{\Lambda^{\frac{2}{m}}\varepsilon^{1-\frac{2}{m}}}\int_{0}^{R_{0}(\frac{\Lambda}{\varepsilon})^{1/m}}\frac{r\,dr}{1+r^{m}}=\frac{\mathcal{L}_{m}}{\Lambda^{\frac{2}{m}}\varepsilon^{1-\frac{2}{m}}} +M_{1}^{(3)}+ O(\varepsilon^{2-\frac{2}{m}}),
\end{align*} 
where $M_{1}^{(3)}:=\frac{2\pi}{\Lambda}R_{0}^{2-m}$. Therefore, \eqref{equ:m>2} holds with $M_{1}:=M_{1}^{(1)}+M_{1}^{(2)}+M_{1}^{(3)}$. Finally, we show that the constant $M_{1}$ is independent of the choice of $R_{0}$. If not, suppose that there exist $M_{1}(R_{0})$ and $M_{1}(\tilde{R}_{0})$, both independent of $\varepsilon$, such that \eqref{equ:m>2} holds, then
$$|M_{1}(R_{0})-M_{1}(\tilde{R}_{0})|\leq\,CE_{m}(\epsilon)\rightarrow0,\quad\mbox{as}~~\varepsilon\rightarrow0,$$
which implies that $M_{1}(R_{0})=M_{1}(\tilde{R}_{0})$. For $C_{22}$, the proof is the same. Thus, the proof of (i) in Proposition \ref{the:1} is completed.
\end{proof}

\subsection{Proof of (ii) in Proposition \ref{the:1}}\label{subsec4.2}

\begin{lemma}\label{equ:c12}
$C_{12}=C_{21}$, where $C_{ij}$ is defined as \eqref{equ:cij}. 
\end{lemma}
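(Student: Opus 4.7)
The plan is to prove $C_{12}=C_{21}$ by applying Green's second identity to the harmonic functions $v_1$ and $v_2$ on the exterior domain $\Omega=\mathbb{R}^3\setminus\overline{D}$, and then carefully handling the contribution from infinity using the decay estimates already established.

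First, since $\Omega$ is unbounded, I would work on the bounded subdomain $\Omega_R:=\Omega\cap B_R(0)$ for $R$ large enough so that $\overline{D}\subset B_R(0)$, and then pass to the limit $R\to\infty$. Because $\Delta v_1=\Delta v_2=0$ in $\Omega$, Green's second identity gives
\begin{equation*}
0=\int_{\Omega_R}\bigl(v_1\Delta v_2 - v_2\Delta v_1\bigr)\,dx = \int_{\partial \Omega_R}\Bigl(v_1\frac{\partial v_2}{\partial n}-v_2\frac{\partial v_1}{\partial n}\Bigr)\,d\sigma,
\end{equation*}
where $n$ denotes the unit outward normal to $\partial \Omega_R=\partial D_1\cup\partial D_2\cup\partial B_R$. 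On $\partial D_i$ the outward normal from $\Omega$ is $-\nu$, where $\nu$ is the outward normal to $D_i$ used in the definition \eqref{eq:cap_reform}.

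Next, I use the boundary data from \eqref{equ:vj}: on $\partial D_1$ we have $v_1\equiv 1$, $v_2\equiv 0$, and on $\partial D_2$ we have $v_1\equiv 0$, $v_2\equiv 1$. Plugging these in, the boundary terms on $\partial D_1\cup\partial D_2$ reduce to
\begin{equation*}
-\int_{\partial D_1}\frac{\partial v_2}{\partial\nu}\Big|_+\,d\sigma + \int_{\partial D_2}\frac{\partial v_1}{\partial\nu}\Big|_+\,d\sigma = C_{12}-C_{21},
\end{equation*}
after using \eqref{eq:cap_reform}.

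The key remaining step is to show that the contribution on $\partial B_R$ vanishes as $R\to\infty$. This is where the unboundedness of $\Omega$, emphasized in the paper, matters. By Lemma \ref{green}, we have $v_j(x)=O(|x|^{-1})$ and $|\nabla v_j(x)|=O(|x|^{-2})$ as $|x|\to\infty$, so the integrand $v_1\partial_n v_2-v_2\partial_n v_1$ is $O(|x|^{-3})$ on $\partial B_R$, while $|\partial B_R|=O(R^2)$. Hence
\begin{equation*}
\left|\int_{\partial B_R}\Bigl(v_1\frac{\partial v_2}{\partial n}-v_2\frac{\partial v_1}{\partial n}\Bigr)\,d\sigma\right| \le C\,R^{-1}\longrightarrow 0 \quad\text{as } R\to\infty.
\end{equation*}
Combining, we conclude $C_{12}-C_{21}=0$. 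The only subtlety here is controlling behavior at infinity, but that has already been dispatched by Lemma \ref{green}, so the proof reduces to this clean application of Green's identity.
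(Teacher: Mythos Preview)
Your proof is correct and follows essentially the same approach as the paper: apply Green's second identity on $B_R\setminus\overline{D}$, use the boundary values of $v_1,v_2$ on $\partial D_1\cup\partial D_2$ to extract $C_{12}-C_{21}$, and dispatch the $\partial B_R$ contribution via the decay $v_j=O(|x|^{-1})$, $|\nabla v_j|=O(|x|^{-2})$. The only cosmetic difference is that the paper cites Proposition~2.74--2.75 of \cite{GB} for the gradient decay, whereas you invoke Lemma~\ref{green}, which already records exactly that estimate.
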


\begin{proof}
For any large $R>>\bar{d}$, By appying the divergence theorem, we have
\begin{align*}
0&=\int_{B_R\backslash D}v_2\Delta v_1-\int_{B_R\backslash D}v_1\Delta v_2 
=\int_{\partial(B_R\backslash D)}v_2\frac{\partial v_1}{\partial \nu}- \int_{\partial(B_R\backslash D)}v_1\frac{\partial v_2}{\partial \nu} \\
&=-\int_{\partial D_2}\frac{\partial v_1}{\partial \nu}+\int_{\partial D_1}\frac{\partial v_2}{\partial \nu}+\int_{\partial{B}_R}v_2\frac{\partial v_1}{\partial \nu}- \int_{\partial{B}_R}v_1\frac{\partial v_2}{\partial \nu} \\
&=C_{12}-C_{21}+\int_{\partial{B}_R}v_2\frac{\partial v_1}{\partial \nu}- \int_{\partial{B}_R}v_1\frac{\partial v_2}{\partial \nu}.
\end{align*}
Using $v_{i}=O(\frac{1}{|x|})$ as $|x|\to\infty$, and Proposition 2.74 and 2.75 in \cite{GB}, which imply that $\frac{\partial v_i}{\partial r}\sim O(\frac{1}{|x|^2})$ as $|x|\to\infty$, and due to the arbitrariness of $R$, we finish the proof.
\end{proof}

\begin{proof}[Proof of (ii) in Proposition \ref{the:1}]
By using Lemma \ref{equ:c12}, and applying the divergence theorem on $B_R$, such that $\overline {D_1\cup D_2}\subset B_R$, we have
\begin{equation}\label{equ5.1}
C_{11}+C_{12}=C_{11}+C_{21}=-\int_{\partial D_1}\frac{\partial v_1}{\partial\nu}\Big|_+-\int_{\partial D_2}\frac{\partial v_1}{\partial\nu}\Big|_+= -\int_{\partial B_R}\frac{\partial v_1}{\partial \nu}\Big|_+.  
\end{equation}
Recalling the Poisson formula, \eqref{equ:u_i},
\begin{equation*}
\begin{aligned}
\frac{\partial v_1}{\partial \nu}\Big|_{x\in\partial B_R}&=\sum_{k=1}^n {\partial}_kv_1\cdot\frac{x_k}{R},\\
&=\int_{\partial B_{r_{0}}}\frac{1}{\omega_3r_{0}|x-y|^3}(-R+\frac{3r_{0}^2}{R}-3\frac{(R^2-r_{0}^2)\sum(x_k-y_k)y_k}{|x-y|^2R})v_1(y)dS_y,
\end{aligned}
\end{equation*}
where $r_{0}$ is fixed in \eqref{equ:u_i}. Now $|y|=r_{0}$, $|x|=R$, since $v_1>0$ in $\R^3\backslash\overline{(D_1\cup D_2)}$,  we choose $R>> r_{0}$, such that 
\begin{equation*}
\frac{-2}{\omega_3r_{0}R^2}\int_{\partial B_{r_{0}}}v_1(y)dS_y<\frac{\partial v_1}{\partial \nu}\Big|_{x\in\partial B_R}<\frac{-1}{2\omega_3r_{0}R^2}\int_{\partial B_{r_{0}}}v_1(y)dS_y.
\end{equation*}
By using \ref{lem:difi},
\begin{equation*}
\frac{-2}{\omega_3r_{0}R^2}\Big(\int_{\partial B_{r_{0}}}v_1^0(y)dS_y+O(\sqrt\epsilon)\Big)<\frac{\partial v_1}{\partial \nu}\Big|_{x\in\partial B_R}<\frac{-1}{2\omega_3r_{0}R^2}\Big(\int_{\partial B_{r_{0}}}v_1^0(y)dS_y+O(\sqrt\epsilon)\Big),
\end{equation*}
Integrating $\frac{\partial v_1}{\partial \nu}$ on $\partial B_R$, by \eqref{equ5.1}, yields 
$$\frac{1}{C}<C_{11}+C_{12}<C.$$
The proof for $C_{21}+C_{22}$ is similar. So the proof of Proposition \ref{the:1} is completed.
\end{proof}

\section{ Gradient Blow-up}\label{sec4}

It is known the eigenmode is approximately a constant on each resonator. If these constants are different, then as the two resonators are moved close together, the gradient of the field between them will blow up. As a direct consequence of Theorem \ref{the:main}, 
we  can also show the gradient blow-up of the eigenmode similarly as in \cite{ADY}.

As in \eqref{equ:eig}, the eigenvector of $\overline{C}$ associated to the eigenvalue $\lambda_n$ is given by
\begin{equation} \label{eq:eigenvectors}
\left(\frac{\lambda_n-\overline{C}_{22}}{\overline{C}_{21}},\,1\,\right).
\end{equation}
From the leading order behaviour of $\lambda_n$, \eqref{equ:sigma_2}, and of the capacitance coefficients \eqref{equ:cij} we have that, for $\epsilon\ll1$,
\begin{equation} \label{eq:d1value}
    \frac{\lambda_n-\overline{C}_{22}}{\overline{C}_{21}}=\begin{cases}
    1+O(\frac{1}{C_{ii}}), & n=1,\\
    -\frac{|D_2|}{|D_1|}+O(\frac{1}{C_{ii}}), & n=2.
    \end{cases}
\end{equation}
Then the eigenmodes are given by
\begin{equation} \label{eq:mode_defn}
    u_n(x)=\S_D[\phi_n](x)+O(\omega_n),\quad\,n=1,2.
\end{equation}
By using \eqref{eq:psi_basis} in Lemma \ref{lem:26}, we have
\begin{equation} \label{eq:mode_densities}
    \phi_n:=\frac{\lambda_n-\overline{C}_{22}}{\overline{C}_{21}}\psi_1+\psi_2.
\end{equation}

Note that $v_j=S_D[\psi_j]$. Then 
\begin{equation}
\begin{aligned}
\S_D[\phi_n](x)&=\frac{\lambda_n-\overline{C}_{22}}{\overline{C}_{21}}\S_D[\psi_1](x)+\S_D[\psi_2](x) \\
&=\frac{\lambda_n-\overline{C}_{22}}{\overline{C}_{21}}v_1(x)+v_2(x).
\end{aligned}
\end{equation}
Hence,
\begin{equation} \label{eq:dvalue}
    u_n(x)=\begin{cases}
    \frac{\lambda_n-\overline{C}_{22}}{\overline{C}_{21}}+O(\omega_n), & x\in \D_1, \\
    1+O(\omega_n), & x\in \D_2.
    \end{cases}
\end{equation}
Therefore, for sufficiently small $\delta>0$, $u_1|_{\D_1}$ and $u_1|_{\D_2}$ have the same sign whereas $u_2|_{\D_1}$ and $u_2|_{\D_2}$ have different signs.

Further,
\begin{equation}
\nabla \S_D[\phi_n](x)=\left(\frac{\lambda_n-\overline{C}_{22}-\overline{C}_{21}}{\overline{C}_{21}}\right)\nabla  v_1(x)+\nabla (v_1+v_2)(x).
\end{equation}
Since $(v_1+v_2)|_{\D_1}=(v_1+v_2)|_{\D_2}=1$, and $0<v_1+v_2<1$, due to Lemma~2.3 of \cite{bly} (or the result in \cite{ammari2007estimates,llby}), we have 
$$\|\nabla (v_1+v_2)\|_{L^{\infty}(\mathbb{R}^{3}\backslash\overline{D_1\cup D_2})}\leq\,C.$$ 
Thus, due to Proposition \ref{the:1}, and \eqref{sigma_1} in Proposition \ref{the:1}, we know that the main singular part of $\nabla \S_D[\phi_n](x)$ is $\left(\frac{\lambda_n-\overline{C}_{22}}{\overline{C}_{21}}-1\right) \nabla v_1(x)$.

Combining with \eqref{eq:d1value}, as a consequence of Proposition \ref{the:1}, we obtain the following gradient estimates for the two eigenmodes $u_{1}$ and $u_{2}$ similarly as in \cite{ADY}. Namely, choose $\epsilon=\epsilon(\delta)$ so that $\epsilon\to0$ as $\delta\to0$, if the two leading order values are different then the maximum of the gradient of the presure between the two resonators must blow up as $\delta\to0$.

\begin{theorem} \label{thm:gradient_bounds}
Let $u_1$ and $u_2$ denote the sub-wavelength eigenmodes for two m-convex $C^{2,\alpha}$ resonators $D_{1}$ and $D_{2}$ in $\mathbb{R}^{3}$, 
which are $\epsilon$-apart as in Theorem \ref{the:main}. Normalize $u_1$ and $u_2$ such that
    \begin{equation*}
        \lim_{\delta\to0} \left\|u_1\right\|_{L^2(\partial D)}\sim1,\qquad \lim_{\delta\to0} \left\|u_2\right\|_{L^2(\partial D)}\sim1,\quad\mbox{for}~x\in\partial{D}_{1}\cup\partial{D}_{2}.
    \end{equation*}
Then suppose that the distance $\epsilon$ satisfies $\epsilon\sim e^{-1/\delta^{1-\beta}}$ for $m=2$, $\epsilon\sim \delta^{(1-\beta)/(1-\frac{2}{m})}$ for $m>2$, for $0<\beta<1$, we have, as $\delta\to0$, 
\begin{equation*}
\begin{aligned}
 &\max_{x\in\mathbb{R}^{3}\setminus\overline{D_{1}\cup{D}_{2}}}|\nabla u_1(x)|\prec
 \begin{cases}
 \frac{1}{\epsilon|\log\epsilon|},&\mbox{if}~m=2,\\
\epsilon^{-\frac{2}{m}},&\mbox{if}~m>2;
\end{cases}
\end{aligned}
\end{equation*}
 and
\begin{equation*}
     \max_{x\in\mathbb{R}^{3}\setminus\overline{D_{1}\cup{D}_{2}}}|\nabla u_2(x)|\sim \frac{1}{\epsilon}. 
\end{equation*}
\end{theorem}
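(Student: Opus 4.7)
The plan is to start from the decomposition already assembled just above the theorem, namely
\[
u_n(x) = \S_D[\phi_n](x) + O(\omega_n), \qquad \nabla \S_D[\phi_n] = \Big(\tfrac{\lambda_n-\overline{C}_{22}}{\overline{C}_{21}}-1\Big)\nabla v_1 + \nabla(v_1+v_2),
\]
and control each factor separately. The regular piece is harmless: $v_1+v_2\equiv 1$ on $\partial D_1\cup \partial D_2$ and $0<v_1+v_2<1$ in $\Omega$, so by Lemma~2.3 of \cite{bly} we have $\|\nabla(v_1+v_2)\|_{L^\infty(\Omega)}\le C$ uniformly in $\varepsilon$. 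The singular piece is pinned down by Lemma~\ref{prop_v1} and its corollary \eqref{nabla_v1_in}, giving
\[
|\nabla v_1(x)| \asymp \frac{1}{\varepsilon+(h_1-h_2)(x')} \text{ in } \Omega_{R_0}, \qquad \|\nabla v_1\|_{L^\infty(\Omega\setminus\Omega_{R_0})}\le C,
\]
so that $\max_{x\in\Omega}|\nabla v_1|\sim \varepsilon^{-1}$, attained on the segment joining $P_1$ and $P_2$. Consequently the leading singular contribution to $\nabla u_n$ is $\bigl|\tfrac{\lambda_n-\overline{C}_{22}}{\overline{C}_{21}}-1\bigr|\cdot \varepsilon^{-1}$, modulo bounded regular terms and the $O(\omega_n)$ correction.

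For the symmetric mode $u_2$, \eqref{eq:d1value} gives $\tfrac{\lambda_2-\overline{C}_{22}}{\overline{C}_{21}}-1 = -\tfrac{|D_1|+|D_2|}{|D_1|}+O(1/C_{ii})$, which is bounded and bounded away from zero as $\varepsilon,\delta\to 0$. The two-sided bound in \eqref{nabla_v1_in} then produces matching upper and lower bounds $\max|\nabla \S_D[\phi_2]|\sim 1/\varepsilon$. Under the prescribed scaling (which forces $\omega_2\sim\sqrt{\delta^\beta}\ll 1/\varepsilon$) the $O(\omega_2)$ error cannot destroy this asymptotic, and the stated $\max|\nabla u_2|\sim 1/\varepsilon$ follows.

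For the antisymmetric mode $u_1$ the prefactor is small: by \eqref{eq:d1value} and Proposition~\ref{the:1}(i),
\[
\Big|\tfrac{\lambda_1-\overline{C}_{22}}{\overline{C}_{21}}-1\Big| \le \frac{C}{\overline{C}_{11}+\overline{C}_{22}} \le \frac{C}{\rho_m(\varepsilon)},
\]
so multiplying by the $\varepsilon^{-1}$ upper bound on $|\nabla v_1|$ yields the claimed
\[
\max|\nabla\S_D[\phi_1]| \le \frac{C}{\varepsilon\rho_m(\varepsilon)} =
\begin{cases} C/(\varepsilon|\log\varepsilon|), & m=2,\\ C\varepsilon^{-2/m}, & m>2,\end{cases}
\]
with the $O(\omega_1)\sim O(\sqrt{\delta})$ term again subordinate under the same scaling of $\varepsilon(\delta)$. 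The main obstacle will be justifying that the $O(\omega_n)$ error in the representation $u_n = \S_D[\phi_n]+O(\omega_n)$ contributes a gradient of order $\omega_n$ rather than something singular as $\varepsilon\to 0$; this is where the uniform operator bounds from Lemma~\ref{lemma11} for the series expansions \eqref{eq:S_series}--\eqref{eq:K_series}, together with the subwavelength normalization $\|u_n\|_{L^2(\partial D)}\sim 1$, have to be invoked carefully. Once this is in place the theorem follows by assembling the estimates above.
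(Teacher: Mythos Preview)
Your proposal is correct and follows essentially the same argument as the paper: the paper also writes $\nabla\S_D[\phi_n]=\bigl(\tfrac{\lambda_n-\overline{C}_{22}}{\overline{C}_{21}}-1\bigr)\nabla v_1+\nabla(v_1+v_2)$, bounds $\nabla(v_1+v_2)$ uniformly via \cite{bly}, and then reads off the two cases from \eqref{eq:d1value} together with Proposition~\ref{the:1} and the pointwise bounds \eqref{nabla_v1_in}--\eqref{nabla_v1_o}. Your treatment is in fact slightly more explicit than the paper's, which simply states the theorem as a consequence of the preceding display without separately discussing the $O(\omega_n)$ remainder; note only that your labels ``symmetric''/``antisymmetric'' for $u_2$/$u_1$ are swapped relative to the eigenvectors in \eqref{eq:d1value}, though this does not affect the mathematics.
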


\noindent{\bf{\large Acknowledgements.}} We thank the referees for carefully reading and many helpful suggestions which improve the exposition.

\vspace{.5cm}

\end{document}